\theoremstyle{definition}
\newtheorem{definition}{Definition}[section]
\newtheorem{remark}[definition]{Remark}
\theoremstyle{plain}
\newtheorem{thm}[subsection]{Theorem}
\newtheorem{lem}[subsection]{Lemma}
\newcommand{\beq}{\begin{eqnarray}}
\newcommand{\eeq}{\end{eqnarray}}
\newcommand{\beqs}{\begin{eqnarray*}}
\newcommand{\eeqs}{\end{eqnarray*}}
\title{\bf Towards the Solution of an Extremal Problem Concerning the Wiener Polarity Index of Alkanes }
\author{
   \large \bf Sadia Noureen$^{a}$, Akhlaq Ahmad Bhatti$^{a}$, Akbar Ali$^{b}$
}
\affil{ \normalsize
    {$^{a}$Department of Mathematics\\ National University of Computer and Emerging Sciences, Lahore, Pakistan}\\
    E-mail: {\texttt{sadia.tauseef@uog.edu.pk, akhlaq.ahmad@nu.edu.pk}}\\
    {$^{b}$Department of Mathematics, Faculty of Science\\ University of Ha'il, Ha'il, Saudi Arabia}
    \\E-mail: {\texttt{akbarali.maths@gmail.com}}
}
\begin{document}

\maketitle

\begin{abstract}
The Wiener polarity index $W_p$, one of the most studied molecular structure descriptors, was devised by the chemist Harold Wiener for predicting the boiling points of alkanes. The index $W_p$ for chemical trees (chemical graphs representing alkanes) is defined as the number of unordered pairs of vertices at distance 3.
A vertex of a chemical tree with degree at least 3 is called a branching vertex. A segment of a chemical tree $T$ is a path-subtree $S$ whose terminal vertices have degrees different from 2 in $T$ and every internal vertex (if exists) of $S$ has degree 2 in $T$. In this paper, the best possible sharp upper and lower bounds on the Wiener polarity index $W_p$ are derived for the chemical trees of order $n$ with a given number of branching vertices or segments, and the corresponding extremal chemical trees are characterized. As a consequence of the derived results, an open problem concerning the maximal $W_p$ value of chemical trees with a fixed number of segments or branching vertices is solved.

\baselineskip=0.30in
\end{abstract}
%
%
%
% ------------------------------------------------------------------------------------
%-------------------    Section one:  INTRODUCTION and related results      ----------
% ------------------------------------------------------------------------------------
%
%
%
% -------------------------------------------------------------------------
% ----------------------           bibliography       ---------------------
% -------------------------------------------------------------------------
%
%------------------------------------------------------------------------------
%

\baselineskip=0.25in
\section[Introduction]{Introduction}

In molecular science one of the important issues is to  model and predict the physicochemical properties of chemical compounds. Many theoretical methods have been developed by different researchers in this regard. Nowadays people are more concerned in one of the methods that involve topological indices \cite{Balban-13}. The use of topological indices has made the life more easier to understand the molecular science.\\

Every chemical compound can be represented by a graph (known as a chemical graph) in which vertices are the atoms of the chemical compound and edges are the bonds. A \textit{topological index} is a numerical value associated with a chemical graph, which remains unchanged under graph isomorphism \cite{Estrada-13}. In 1947, Harold Wiener proposed a topological index while he was working on boiling points of the paraffins \cite{Wiener-47}. He proposed a linear formula in which two parameters, say $W$ and $W_p$, were used, which were later on named as the Wiener index and the Wiener polarity index, respectively. Much attention was not given to $W_p$ until Lukovits and Linert demonstrated the importance of $W_p$ in the  quantitative structure-property relationships (QSPR) in a series of acyclic and cycle-containing hydrocarbons \cite{Luk-98}. Detail about the chemical applications of $W_p$ can be found in the papers\cite{Hosoya-02,Luk-98,Milic-04,Safari-17,Shafi-17a,Shafi-17b,Wiener-47,Tratnik-19,Du-18b}. Considering the importance of this topological index, many researchers have devoted their attention towards it and studied its mathematical properties, for example see the papers \cite{Chen-16,Hua-16,Lei-18,Lei-17,Ma-16,Ma-14,Yue-18,Zhang-16,Du-18a} and references listed therein.\\

Before going further, let us recall some graph theoretical definitions and notations. Let $G=(V(G),E(G))$ be a simple graph with the vertex set $V(G)$ and edge set $E(G)$. The degree of a vertex $u\in V(G)$, denoted by $d_u(G)$ (or simply by $d_u$ if the graph under consideration is clear), is the number of edges incident with $u$. A connected graph of order $n$ and size $n-1$ is called a \textit{tree}.  A tree with maximum degree at most 4 is called a \textit{chemical tree}. A vertex of degree one in a graph is said to be \textit{pendent vertex} and a vertex of degree greater than 2 is called a \textit{branching vertex}. A \textit{segment} of a tree T (see \cite{Dobrynin}) is a path-subtree $S$ whose terminal vertices have degrees different from 2 in $T$ and every internal vertex (if exists) of $S$ has degree 2 in $T$. A sequence $P=v_0v_1\cdots v_k$ of vertices of a tree $T$ is called a \textit{pendent path} (\textit{internal path}, respectively) of length $k$, if each two consecutive vertices in $P$ are adjacent in $T$, one of the two vertices $v_0$, $v_k$ is pendent and the other is branching (both the vertices $v_0$, $v_k$ are branching, respectively), and $d_{v_i}=2$ if $1\le i \le k-1$.\\

Du \textit{et al.}\cite{Du-09} showed that Wiener polarity index $W_p$ of any tree $T$ can be written as:
$$W_{p}(T)= \sum_{uv\in E(T)}{(d_u-1)(d_v-1)}.$$
Here, it should be noted that in case of trees, $W_p$ coincides with the reduced second Zagreb index \cite{Furtula-14,Gutman-14,Shaf-17}. In \cite{Furtula-14,Du-09,Shaf-17} the researchers determined the bounds on $W_p$ for some classes of graphs\,. In \cite{Liu-12}, the authors determined maximum $W_p$ among the class of $n$-vertex trees with the given degree sequence. Deng \textit{et al.} in \cite{Deng-11} determined the maximum and minimum $W_p$ value of the chemical trees on $n$ vertices with $n\ge 4$. For, further references on extremal results on $W_p$ we refer the reader to the references \cite{Ali-18a,Du-18a,Ashrafi-17,Deng-10a,Hou-12}.\\

The main motivation of the present study comes from the recent paper \cite{Noureen-20} where the problem of finding the graphs having maximal $W_p$ value among all chemical trees of order $n$ with a fixed number of segments or branching vertices was attacked and left open. In this paper, we solve this problem.\\

In Sections 2 and 3, the best possible sharp upper and lower bounds on $W_p$ are derived, respectively, for the chemical trees of order $n$ with a given number of branching vertices, and the corresponding extremal trees are characterized. The best possible sharp upper and lower bounds on $W_p$ for chemical trees of order $n$ with a fixed number of segments are established in Sections 4 and 5, and the trees attaining these bounds are also characterized there. In what follows, we define few notions that will be used in the next sections.\\

A tree of order $n$ is called $n$-vertex tree. Let $\mathcal{CT}_{n,k}$ be the class of all $n$-vertex chemical trees with exactly $k$ segments, where $3\le k \le n-2$. Denote by $\mathcal{CT}_{n,b}^*$ the class of all $n$-vertex chemical trees with exactly $b$ branching vertices, where $1\le b< \frac{n}{2}-1$.
Denote by $N_G(v)$ or simply by $N(v)$ the set of all those vertices of $G$ that are adjacent to the vertex $v\in V (G)$.
Let $n_i(G)$ (or $n_i$) be the number of vertices of $G$ that have degree $i$.

\section{On the maximum Wiener Polarity index of chemical trees with fixed number of branching vertices}

Let $BT_{max}$ be the tree with the maximal $W_{p}$ value among all the members of $\mathcal{CT}_{n,b}^*$, where $1\leq b < \frac{n}{2}-1$. Throughout this section, we assume that $n\ge 7$. For proving the main result of this section, we firstly need some structural properties of the tree $BT_{max}$.
The following result concerning the length of the internal path of the tree $BT_{max}$ was appeared in the recent paper \cite{Noureen-20}.

\begin{lem}\label{lem-bm1} {\rm \cite{Noureen-20}}
The tree $BT_{max}\in \mathcal{CT}_{n,b}^*$ does not contain any internal path of length greater than 1.
\end{lem}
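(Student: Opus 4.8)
The plan is to argue by contradiction using a local surgery that keeps the degree sequence intact --- hence preserves the order $n$, the chemical-tree property, and the number $b$ of branching vertices --- while strictly increasing $W_p$. Suppose $BT_{max}$ contained an internal path $P=v_0v_1\cdots v_k$ with $k\ge 2$; then $v_0,v_k$ are branching, so $d_{v_0},d_{v_k}\in\{3,4\}$, while $d_{v_1}=\cdots=d_{v_{k-1}}=2$. I would build a competitor $T'$ as follows: delete the internal vertices $v_1,\dots,v_{k-1}$ and add the edge $v_0v_k$; in the tree so obtained choose any pendent vertex $u$ (at least one survives, since $BT_{max}$ has at least three leaves --- it has a branching vertex --- and no vertex of degree $2$ is a leaf, so $v_0,v_k$ are not pendent and all original leaves remain); finally attach a new pendent path of length $k-1$ at $u$.

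The first routine step is to verify $T'\in\mathcal{CT}_{n,b}^*$: it is a tree on $n$ vertices; the only vertex whose degree changes is $u$ (from $1$ to $2$), so $T'$ is still a chemical tree; and since $v_0,v_k$ retain their degrees, the removed vertices had degree $2$, $u$ passes from degree $1$ to degree $2$, and every newly inserted vertex has degree at most $2$, no vertex enters or leaves the set of branching vertices. In fact $T'$ and $BT_{max}$ have the same degree sequence.

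The core computation compares the two $W_p$ values through $W_p(T)=\sum_{xy\in E(T)}(d_x-1)(d_y-1)$; only the edges meeting $P$ or the pendent edge $uw$ (with $w$ the neighbour of $u$) are affected. The $k$ edges of $P$ contribute $(d_{v_0}-1)+(k-2)+(d_{v_k}-1)$ in $BT_{max}$ and are replaced in $T'$ by the single edge $v_0v_k$ contributing $(d_{v_0}-1)(d_{v_k}-1)$; the edge $uw$ contributes $0$ before and $d_w-1$ after; and the $k-1$ new pendent-path edges add a further $k-2$. Collecting terms yields
\[
W_p(T')-W_p(BT_{max})=(d_{v_0}-2)(d_{v_k}-2)+(d_w-2).
\]
Because $d_{v_0},d_{v_k}\ge 3$ we get $(d_{v_0}-2)(d_{v_k}-2)\ge 1$, and $d_w\ge 2$ since $u$ is a leaf of a tree with at least three leaves (hence at least four vertices); so the right-hand side is $\ge 1>0$, contradicting the maximality of $BT_{max}$.

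I do not expect a serious obstacle. The only point needing a little care is the bookkeeping behind the surgery --- confirming that a suitable pendent vertex $u$ with $d_w\ge 2$ exists in the intermediate tree, that $v_0$ and $v_k$ are distinct and non-adjacent there (so adding $v_0v_k$ creates neither a multi-edge nor a cycle), and that relocating the degree-$2$ vertices onto a pendent path leaves the degree sequence unchanged. All of this follows from the observations above, using only that $BT_{max}$ has at least three leaves (guaranteed since $b\ge 1$, and comfortably since $n\ge 7$).
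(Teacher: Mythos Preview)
The paper does not supply its own proof of this lemma: it is quoted from \cite{Noureen-20} and stated without argument, so there is no in-paper proof to compare against line by line. That said, your argument is correct. The surgery --- contracting the internal path $v_0v_1\cdots v_k$ to the single edge $v_0v_k$ and re-inserting the $k-1$ suppressed degree-$2$ vertices as a pendent path hung from a leaf $u$ --- preserves the degree sequence (hence $n$, $b$, and the chemical constraint), and your computation
\[
W_p(T')-W_p(BT_{max})=(d_{v_0}-2)(d_{v_k}-2)+(d_w-2)\ge 1
\]
is accurate. The bookkeeping you flag (existence of a leaf $u$ with $d_w\ge2$, $v_0\ne v_k$, and $v_0v_k\notin E$ after the deletion) all goes through for the reasons you give; note in particular that $v_1,\dots,v_{k-1}$ cannot be adjacent to a leaf, so $w$ survives the deletion with its degree unchanged.

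Methodologically your proof is in exactly the same spirit as the transformations the paper uses elsewhere (e.g.\ Lemmas~\ref{bn1}, \ref{bn2}, \ref{sn1}, \ref{sn3}): exhibit a competitor in $\mathcal{CT}_{n,b}^*$ via a local edge/vertex move and show the edge-sum $\sum_{xy}(d_x-1)(d_y-1)$ goes the right way. The original argument in \cite{Noureen-20} is of the same type, so your approach should be regarded as essentially equivalent rather than genuinely different.
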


\begin{lem}\label{lem-bm2}
 If the tree $BT_{max}\in \mathcal{CT}_{n,b}^*$ contains a non-branching vertex adjacent to a vertex of degree 4, then $BT_{max}$ does not contain adjacent vertices of degree 3.
\end{lem}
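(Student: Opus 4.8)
The plan is to argue by contradiction via a local edge-reconfiguration ("graph transformation") that strictly increases $W_p$ while preserving the order $n$ and the number $b$ of branching vertices, thereby contradicting the maximality of $BT_{max}$. Suppose $BT_{max}$ contains a non-branching vertex $w$ (so $d_w \le 2$) adjacent to a vertex $u$ with $d_u = 4$, and suppose also that $BT_{max}$ contains two adjacent branching vertices $x,y$ with $d_x = d_y = 3$. By Lemma~\ref{lem-bm1} the edge $xy$ is not an internal path of length $>1$; it is itself an internal path of length $1$. First I would set up the bookkeeping: using the edge-contribution formula $W_p(T)=\sum_{ab\in E(T)}(d_a-1)(d_b-1)$, the change $\Delta W_p$ under a transformation is a sum over only the edges whose endpoint-degrees change, so the computation is local.

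The transformation I have in mind relocates a "unit of degree" from the low-degree end (around $w$) toward one of $x,y$, or conversely detaches something from $u$ and reattaches it so as to reduce $d_u$ from $4$ and simultaneously break the $x$–$y$ adjacency of two degree-$3$ vertices; the point is to exploit the convexity/superadditivity of the map $(d_a-1)(d_b-1)$: concentrating degree on one vertex while the two former degree-$3$ neighbours are separated tends to raise $W_p$. Concretely, one natural move: let $P$ be a pendent path or pendent edge hanging off $u$ (one exists since $d_u=4>$ number of branching neighbours forced by Lemma~\ref{lem-bm1} considerations, or because $w$ itself lies on such a path), delete the edge from $u$ to the first vertex of $P$, and attach that vertex instead to $x$ (or to $y$). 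This lowers $d_u$ to $3$, raises $d_x$ to $4$, keeps the set of branching vertices unchanged (so $b$ is preserved and we stay in $\mathcal{CT}^*_{n,b}$ — need $d_x$ was $3$, becomes $4$, still branching; $d_u$ was $4$, becomes $3$, still branching), and one checks $\Delta W_p>0$ by comparing the old local sum around $u$ and $x$ with the new one. I would verify the chemical-tree constraint (max degree $\le 4$) is not violated — that is exactly why we need the *degree-$3$* hypothesis on $x$: attaching to a degree-$4$ vertex would be illegal.

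The main obstacle, as usual in this kind of argument, is case analysis: the neighbourhoods of $u$, $x$, $y$, $w$ can overlap or be far apart, the pendent structure hanging off $u$ can be a single pendent edge or a longer pendent path (whose internal vertices have degree $2$ — relevant to Lemma~\ref{lem-bm1}), and $w$ might coincide with a vertex on the path between $u$ and $\{x,y\}$. I would organize the proof into a small number of cases according to (i) whether $u\in\{x,y\}$ or is adjacent to $\{x,y\}$, (ii) the location of $w$, and (iii) the type of pendent branch available at $u$, and in each case exhibit an explicit transformation with $\Delta W_p>0$. A secondary subtlety is making sure the chosen transformation does not accidentally create a longer internal path (which would not contradict anything, but complicates staying inside the family of candidate maximizers) — here Lemma~\ref{lem-bm1} can be invoked to restrict the configurations that need to be considered in the first place. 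Once every case yields a strict increase in $W_p$ without changing $n$ or $b$, the contradiction with the maximality of $BT_{max}$ completes the proof.
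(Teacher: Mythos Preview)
Your overall strategy (contradiction via a local edge move that preserves $n$ and $b$ but strictly increases $W_p$) is correct, but the concrete transformation you settle on does \emph{not} always produce $\Delta W_p>0$, so the argument has a genuine gap. With your labeling ($d_u=4$, $d_w\le 2$, $d_x=d_y=3$, $xy\in E$), detaching a neighbour $p$ of $u$ and attaching it to $x$ gives, assuming the neighbourhoods are disjoint,
\[
W_p(T')-W_p(T)=\sum_{b\in N(x)}(d_b-1)\;-\!\!\sum_{a\in N(u)\setminus\{p\}}(d_a-1),
\]
and nothing in the hypotheses forces this to be positive. For instance, take $u$ with one pendent neighbour $w$ and three neighbours of degree $4$, while $x$ has $y$ and two pendent neighbours; then the right–hand side is $(2+0+0)-(3+3+3)=-7$ (or $5-9=-4$ if one of the degree-$4$ neighbours of $u$ happens to be adjacent to $x$). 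So your move can \emph{decrease} $W_p$. The heuristic ``concentrating degree raises $W_p$'' fails here because you are just swapping which vertex has degree $4$ and which has degree $3$; the gain or loss is governed entirely by the surrounding neighbour degrees, which you do not control.

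The paper uses a different move that does exploit the hypothesis. In the paper's notation the two adjacent degree-$3$ vertices are $w,z$ and the non-branching vertex adjacent to a degree-$4$ vertex is $u$; one deletes the two edges from $w$ to its neighbours $w_1,w_2$ other than $z$ and re-attaches $w_1,w_2$ to $u$. Thus $w$ drops from degree $3$ to degree $1$ while $u$ rises from degree $d_u\le 2$ to $d_u+2\le 4$, so the branching count is preserved and the chemical bound is respected. The key inequality becomes
\[
W_p(BT_{\max})-W_p(T')\le 4-2\sum_{x\in N(u)}(d_x-1),
\]
and this is strictly negative precisely because $u$ has a neighbour of degree $4$, forcing $\sum_{x\in N(u)}(d_x-1)\ge 3$. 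In other words, the hypothesis ``non-branching vertex adjacent to a vertex of degree $4$'' is used not to locate a detachable branch at the degree-$4$ vertex, but to guarantee that the \emph{receiving} vertex $u$ sits next to high degree, so that raising $d_u$ by $2$ is profitable. Your move never touches the non-branching vertex and so never uses this hypothesis; that is why it can fail.
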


\begin{proof}
Assume, on the contrary, that $w,z\in BT_{max}$ be the vertices of degree 3 such that $w$ and $z$ are adjacent to each other and there is a non-branching vertex $u$ adjacent to a vertex $v$ of degree 4. Let $w_1$ and $w_2$ be the neighbors of $w$ different from $z$, then $d_{w_1}\ge 1$ and $d_{w_2}\ge 1$. If $T^{\prime}=BT_{max}-w_{1}w-w_{2}w+uw_{1}+uw_{2}$,

%\begin{figure}[ht]
 %\centering
  %\includegraphics[width=0.50\textwidth]{Fig2.pdf}
   %\caption{Fig2 }
    %\label{Fig2}
     %\end{figure}

then it can easily be observed that $T^{\prime}\in \mathcal{CT}_{n,b}^*$ and
\beqs
W_{p}(BT_{max})- W_{p}(T^{\prime})&=&4+2(d_{w_1}-1)+2(d_{w_2}-1)+(d_u-1)\sum_{x\in N(u)}(d_{x}-1)\\
&&-(d_u+1)(d_{w_1}-1)-(d_u+1)(d_{w_2}-1)\\
&&-(d_u+1)\sum_{x\in N(u)}(d_{x}-1)\\
&\leq&4-2\sum_{x\in N(u)}(d_{x}-1) <0,
\eeqs
which is a contradiction to the choice of $BT_{max}$.

\end{proof}

\begin{lem}\label{lem-bm3} {\rm \cite{Noureen-20}}
If the tree $BT_{max}\in \mathcal{CT}_{n,b}^*$ contains a pendent vertex adjacent to a branching vertex, then $BT_{max}$ does not contain a pendent path of length greater than 2.
\end{lem}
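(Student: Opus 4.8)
The plan is to argue by contradiction via a local edge‐swap that strictly increases $W_p$ while staying inside $\mathcal{CT}_{n,b}^*$, mirroring the strategy of Lemmas \ref{lem-bm2} and the transformations already used for $BT_{max}$. Suppose $BT_{max}$ contains a pendent vertex $p$ adjacent to a branching vertex $x$ (so $d_x\in\{3,4\}$), and simultaneously contains a pendent path $P=v_0v_1\cdots v_k$ of length $k\ge 3$, where $v_0$ is pendent, $v_k$ is branching, and $d_{v_1}=\cdots=d_{v_{k-1}}=2$. The idea is to detach a short piece from the ``long'' pendent path and reattach it at the site of the pendent vertex $p$, thereby converting two degree-$2$ vertices on $P$ into a longer chain hanging off $x$, which raises the contribution of the edges incident to $x$.

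The key steps, in order, are: first, fix notation for the neighbours of $v_k$ and write down the candidate tree $T'$ obtained from $BT_{max}$ by a transformation of the form $T' = BT_{max} - v_1v_2 + p\,v_2$ (possibly composed with a second such move if one relocation is not enough to force the strict inequality), checking that $T'$ is again a chemical tree on $n$ vertices; second, verify that the number of branching vertices is unchanged — this needs a small case analysis according to whether $d_x=3$ or $d_x=4$ and whether $v_k$ remains branching after the move, since deleting $v_1v_2$ shortens $P$ but does not touch $v_k$, and attaching $v_2$ to $p$ makes $p$ a degree-$2$ vertex while $x$ keeps its degree, so $b$ is preserved; third, compute $W_p(T')-W_p(BT_{max})$ using the edge formula $W_p(T)=\sum_{uv\in E(T)}(d_u-1)(d_v-1)$, noting that only the edges near $\{p,x,v_0,v_1,v_2\}$ change their degree-factors, and show this difference is strictly positive, contradicting maximality. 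The degree factor of $x$ goes from being multiplied into $(d_p-1)=0$ to being multiplied into a positive quantity once $p$ acquires degree $2$, which is the source of the gain.

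The main obstacle I expect is the bookkeeping in the $W_p$ difference together with correctly handling the boundary cases $k=3$ (so $v_2$ is adjacent to $v_k$) versus $k\ge 4$, and the sub-case where $v_0$'s position changes which edges carry the factor $(d_{v_0}-1)$; in particular when $k=3$ one must be careful that the edge $v_2v_k$ changes its left endpoint's degree and that $v_k$ might or might not still be a branching vertex in $T'$. I would organise this as: (i) $d_x=4$, which is the cleanest since the factor $3$ is simply moved from a zero to a positive term; (ii) $d_x=3$, handled similarly with factor $2$; in each case reducing everything to an inequality of the shape ``$(\text{positive constant}) - (\text{small nonnegative correction}) > 0$'' as in the displayed computation of Lemma \ref{lem-bm2}. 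If a single relocation of $v_2$ turns out to only give $\ge 0$ in some degenerate configuration, the fallback is to relocate the entire pendent sub-path segment of length $k-2$ at once (i.e.\ $T' = BT_{max} - v_1v_2 + p v_2$ followed by absorbing the remaining degree-$2$ vertices), which strictly increases $W_p$ for $k\ge 3$; invoking Lemma \ref{lem-bm1} to control internal paths may also be needed to ensure the transformed tree has no overly long internal path, though since the move is confined to a pendent region this should be automatic.
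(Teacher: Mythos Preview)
The paper does not supply its own proof of this lemma; it is quoted from \cite{Noureen-20} without argument, so there is no in-paper proof to compare your proposal against. Your overall plan---argue by contradiction via a local edge-swap that keeps $n$ and $b$ fixed and strictly increases $W_p$---is the standard approach and is correct in spirit.

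However, the specific transformation you write, $T'=BT_{max}-v_1v_2+p\,v_2$, is not valid: deleting the edge $v_1v_2$ separates the component $\{v_0,v_1\}$ from the rest of the tree, and adding $pv_2$ does nothing to reconnect $v_0$ and $v_1$, so $T'$ is disconnected and is not a tree at all. The degree bookkeeping you sketch (``$p$ acquires degree $2$'', etc.) is therefore built on a non-existent object, and the case analysis you anticipate ($k=3$ versus $k\ge 4$, $d_x=3$ versus $d_x=4$) never gets off the ground.

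A clean repair is to excise $v_1$ from the long path and hang it on $p$: set
\[
T' \;=\; BT_{max} - \{v_0v_1,\,v_1v_2\} + \{v_0v_2,\,pv_1\}.
\]
Then $d_{v_1}$ drops from $2$ to $1$, $d_p$ rises from $1$ to $2$, and every other degree is unchanged, so $T'\in\mathcal{CT}_{n,b}^*$. Only the five edges $v_0v_1$, $v_1v_2$, $v_0v_2$, $pv_1$, $px$ change their contribution, and one finds
\[
W_p(T')-W_p(BT_{max}) \;=\; (d_x-1)-1 \;=\; d_x-2 \;\ge\; 1 \;>\; 0,
\]
uniformly for all $k\ge 3$ and all $d_x\in\{3,4\}$. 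No case split is needed, and the ``fallback'' relocation and the appeal to Lemma~\ref{lem-bm1} that you flag at the end are unnecessary.
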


\begin{lem}\label{lem-bm5}
In the tree $BT_{max}\in \mathcal{CT}_{n,b}^*$ each vertex of degree 3 contains at most one neighbor of degree 4.
\end{lem}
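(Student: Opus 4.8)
The plan is to argue by contradiction via a local edge-swap of exactly the same flavour as in Lemmas~\ref{lem-bm1}--\ref{lem-bm3}. Suppose $v\in BT_{max}$ is a vertex of degree~$3$ having two neighbours $x$ and $y$ of degree~$4$; let $z$ be the third neighbour of $v$, with $d_z\ge 1$. The idea is to detach $v$ (together with the subtree hanging at $z$) from one of the degree-$4$ neighbours, say $x$, and reattach it at a vertex that currently has smaller degree, so that the total $W_p$ strictly increases while the number of branching vertices is preserved. The natural candidate to reattach to is a non-branching vertex, if one exists adjacent to $x$ or $y$; here I would first want to dispose of the degenerate possibility that \emph{every} vertex adjacent to $x$ and $y$ (other than $v$) is itself branching, which by Lemma~\ref{lem-bm1} and an easy count on $n$ and $b$ can be pushed aside, or handled by a separate swap that moves a pendent path.

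The main computation will be to evaluate $W_p(T')-W_p(BT_{max})$ for the swap $T'=BT_{max}-vx+vw$, where $w$ is a suitably chosen vertex of degree at most~$3$ (and to first verify $T'\in\mathcal{CT}_{n,b}^*$, i.e. that the swap changes no vertex degree except dropping $d_x$ from $4$ to $3$ and raising $d_w$ by one, and that neither $x$ nor $w$ crosses the branching threshold in the wrong direction). The edge $vx$ contributed $(d_v-1)(d_x-1)=2\cdot 3=6$ in $BT_{max}$, while the new edge $vw$ contributes $2(d_w-1+1)=2d_w\le 6$ after the increment; simultaneously, lowering $d_x$ from $4$ to $3$ decreases the contribution of each of the three remaining edges at $x$ by the common factor $(d_x-1)$, i.e. by $\sum_{t\in N(x)\setminus\{v\}}(d_t-1)$, whereas raising $d_w$ increases the contribution of each edge at $w$ (other than $vw$) by $\sum_{s\in N(w)\setminus\{v\}}(d_s-1)$. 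Collecting terms, $W_p(BT_{max})-W_p(T')$ reduces to an expression of the form $6 - 2d_w + \sum_{t\in N(x)\setminus\{v\}}(d_t-1) - \sum_{s\in N(w)\setminus\{v\}}(d_s-1)$, which I want to force to be strictly positive (equivalently $W_p(T')>W_p(BT_{max})$, the contradiction) by exploiting that $x$ has degree~$4$ with three remaining neighbours while $w$ has degree at most~$3$ with at most two remaining neighbours, and that Lemma~\ref{lem-bm2} together with Lemma~\ref{lem-bm5} being already available in the bound on the length of internal paths constrains the degrees $d_t$ from below.

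The subtle point, and the step I expect to cost the most care, is the choice of $w$: one must pick a reattachment site whose neighbourhood sum $\sum_{s\in N(w)\setminus\{v\}}(d_s-1)$ is small enough relative to $\sum_{t\in N(x)\setminus\{v\}}(d_t-1)$. If $BT_{max}$ has a pendent vertex $p$ whose neighbour $q$ satisfies $d_q\le 3$, take $w=q$ (then the relevant sum over $N(w)\setminus\{v\}$ involves at most one nontrivial term); if instead every pendent vertex hangs off a degree-$4$ vertex, one should combine this with Lemma~\ref{lem-bm3} (pendent paths have length at most~$2$) and Lemma~\ref{lem-bm2} to locate a degree-$2$ or degree-$3$ vertex far from $x,y$ that works, or to produce a two-step swap. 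Once the correct $w$ is fixed, the inequality $W_p(BT_{max})-W_p(T')<0$ should follow by the same kind of coarse estimate displayed in the proof of Lemma~\ref{lem-bm2}, namely bounding $\sum_{s\in N(w)\setminus\{v\}}(d_s-1)$ above by $3$ or $6$ and $\sum_{t\in N(x)\setminus\{v\}}(d_t-1)$ below by $3$, and noting $2d_w\le 6$. This contradiction with the maximality of $BT_{max}$ finishes the argument.
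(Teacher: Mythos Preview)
Your proposal contains a genuine error in the direction of the swap, and the computation you set up actually shows the \emph{opposite} of what you need. You compute (correctly) that
\[
W_p(BT_{max})-W_p(T')=6-2d_w+\sum_{t\in N(x)\setminus\{v\}}(d_t-1)-\sum_{s\in N(w)}(d_s-1),
\]
and you want this quantity to be strictly negative. But then you propose to choose $w$ with \emph{small} neighbourhood sum $\sum_s(d_s-1)$ and to bound $\sum_t(d_t-1)$ \emph{below}; these estimates push the expression upward, not downward. In fact, for the natural choice you suggest (a pendent vertex $p$ with neighbour $q$ of degree $\le 3$, and $w=q$), the difference is nonnegative: e.g.\ if $d_w=1$ then $6-2d_w=4$ and $\sum_s(d_s-1)\le 3$, so $W_p(BT_{max})-W_p(T')\ge 1>0$, which is perfectly consistent with maximality of $BT_{max}$ and gives no contradiction. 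The underlying intuition is backwards: $W_p$ rewards high-degree adjacencies, so detaching from a degree-$4$ vertex and reattaching at a low-degree site \emph{decreases} $W_p$. There is also a bookkeeping issue: your swap changes $d_x$ from $4$ to $3$ and $d_w$ to $d_w+1$, so $b$ is preserved only when $d_w\in\{1,3\}$, yet your case analysis allows $d_w=2$. Finally, note the circular reference to Lemma~\ref{lem-bm5} itself.

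The paper's argument avoids all of this by a degree-preserving rewiring rather than a single edge relocation. With $z$ the degree-$3$ vertex and $x,y$ its degree-$4$ neighbours, one picks any non-branching vertex $u$ adjacent to a branching vertex $v$ and sets
\[
T'=BT_{max}-\{xz,zy,uv\}+\{xy,uz,zv\}.
\]
Every vertex keeps its old degree, so $T'\in\mathcal{CT}_{n,b}^*$ automatically. The point is that replacing the two $3$--$4$ edges $xz,zy$ (combined contribution $12$) by the single $4$--$4$ edge $xy$ (contribution $9$) costs only $3$, whereas subdividing the edge $uv$ by $z$ gains $2(d_u-1)+2(d_v-1)-(d_u-1)(d_v-1)$; with $1\le d_u\le 2$ and $3\le d_v\le 4$ this gain exceeds $3$, forcing $W_p(T')>W_p(BT_{max})$. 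The key idea your attempt is missing is precisely this: the profitable move is to \emph{merge} the two degree-$4$ vertices into direct adjacency by splicing $z$ out, not to lower the degree of one of them.
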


\begin{proof}
Suppose, on the contrary, that $z$ is a vertex of degree 3 in $BT_{max}$ with neighbors $x$, $y$ such that $d_{x}=d_y=4$. Let $u$ be a non-branching vertex adjacent to a branching vertex $v$ in $BT_{max}$.

%\begin{figure}[ht]
 %\centering
  %\includegraphics[width=0.50\textwidth]{Fig4.pdf}
   %\caption{Fig4 }
    %\label{Fig4}
     %\end{figure}

If $T^{\prime}= BT_{max}-xz-zy-uv+xy+uz+zv$, then $T^{\prime}\in \mathcal{CT}_{n,b}^*$ and
\beqs
W_{p}(BT_{max})- W_{p}(T^{\prime})&=& 8+d_{u}d_{v}-3d_{u}-3d_{v},
%6+6+(d_{u}-1)(d_{v}-1)-9-2(d_{v}-1)-2(d_{u}-1)\\
%&=& 3+(d_{u}-1)(d_{v}-1)-2(d_{v}-1)-2(d_{u}-1)\\
%&=& 8+d_{u}d_{v}-3d_{u}-3d_{v}\\%&<& 0,\\
%&&(since\ 1\leq d_{u}\leq 2\  and\ 3\leq d_{v}\leq 4)
\eeqs
which is negative because the function $f$ defined by $f(a,b)=ab-3a-3b+8$, with $1\le a\le 2$ and $3\le b\le 4$, is negative, and hence we have
$W_{p}(BT_{max})<W_{p}(T^{\prime}),$ which is a contradiction to the choice of $BT_{max}$.
\end{proof}

\begin{lem}\label{lem-bm4}

a) If a tree $T\in \mathcal{CT}_{n,b}^*$ contains a vertex $v$ of degree 2 with neighbors $u$ and $w$ such that $d_w=3$ and $d_u\ge 1$, where $w_1$ and $w_2$ are the neighbors of $w$ different from $v$, then a tree $T^{\prime}$ can be obtained from $T$ as $T^{\prime}=T-\{ww_1,ww_2\}+\{vw_1,vw_2\}$, which gives $W_{p}(T)\le W_{p}(T^{\prime})$\,.\\

b) If a tree $T\in \mathcal{CT}_{n,b}^*$ contains vertices $u,v,w,z$ such that $uv, vw\in E(T)$ with $d_u\ge 1$, $d_v=2$, $d_w=4$ and $d_z=3$ where $z_1$, $z_2$ are neighbors of $z$ not lying on the $wz$-path, then a tree $T^{\prime}$ can be obtained from $T$ as $T^{\prime}=T-\{zz_1,zz_2\}+\{vz_1,vz_2\}$, which gives $W_{p}(T)\le W_{p}(T^{\prime})$\,.
\end{lem}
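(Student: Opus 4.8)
The plan is to prove both parts by the same edge-switching device that has been used throughout this section: move the two ``pendant branches'' at a degree-$3$ vertex onto a degree-$2$ vertex nearby, show that the resulting tree still lies in $\mathcal{CT}_{n,b}^*$ (the number of branching vertices is preserved because we destroy one branching vertex and create one), and then compute the difference $W_p(T')-W_p(T)$ and check it is nonnegative under the stated degree constraints. So the first step for part (a) is to check membership: after deleting $ww_1,ww_2$ and adding $vw_1,vw_2$, the vertex $w$ drops to degree $1$, the vertex $v$ rises to degree $4$, while the degrees of $u,w_1,w_2$ are unchanged; hence $v$ becomes a new branching vertex, $w$ ceases to be one, no other vertex changes its branching status, and the chemical condition $\Delta\le 4$ is maintained, so $T'\in\mathcal{CT}_{n,b}^*$.

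Next I would compute $W_p(T)-W_p(T')$ for part (a) using the edge-sum formula $W_p(T)=\sum_{xy\in E(T)}(d_x-1)(d_y-1)$. Only the edges incident to $w$, to $v$, and to $w_1,w_2$ contribute to the difference. Writing $a=d_u-1\ge 0$, $p=d_{w_1}-1\ge 0$, $q=d_{w_2}-1\ge 0$, one collects: the edges $vw$ and $vu$ change because $d_v$ goes $2\to 4$; the edges $ww_1,ww_2$ are replaced by $vw_1,vw_2$; and the contribution of edges farther from $w_1,w_2$ (if any) is unaffected since $d_{w_1},d_{w_2}$ do not change. A short bookkeeping should give something of the form $W_p(T)-W_p(T')=$ (a nonpositive expression in $a,p,q$), so that $W_p(T)\le W_p(T')$; the key arithmetic fact is that raising the degree of the ``receiving'' vertex $v$ from $2$ to $4$ more than compensates for lowering $d_w$ from $3$ to $1$ whenever $d_u\ge 1$, which always holds. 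Part (b) is the same computation with the single modification that the receiving vertex $v$ is now adjacent (via the $wz$-path, possibly through internal degree-$2$ vertices) to a degree-$4$ vertex $w$ rather than being the neighbour of the degree-$3$ vertex itself; I would set up $T'=T-\{zz_1,zz_2\}+\{vz_1,vz_2\}$, verify as before that $z$ drops to degree $1$ and $v$ rises to degree $4$ so that the branching-vertex count is preserved, and then run the analogous edge-sum bookkeeping.

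The step I expect to be the main obstacle is the case analysis hidden inside ``the contribution of edges farther out is unchanged.'' One has to be careful about coincidences: $w_1$ or $w_2$ could itself be $u$'s type of vertex, $u$ could be pendant or of degree $2$, and in part (b) the vertices $z_1,z_2$ could have various degrees; in part (b) there is the extra subtlety that the path from $v$ to $z$ may pass through other vertices, so one must confirm the switch does not create an internal path violating Lemma~\ref{lem-bm1} or otherwise leave the tree — but since we only need $T'\in\mathcal{CT}_{n,b}^*$ and the inequality $W_p(T)\le W_p(T')$ (not that $T'$ shares all structural features of $BT_{max}$), these subtleties reduce to checking that the degree sequence changes exactly as claimed. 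Once the local degree changes are pinned down, the inequality itself is a one-line estimate: in both parts the difference $W_p(T)-W_p(T')$ is a polynomial that is manifestly $\le 0$ on the admissible ranges $d_u\ge 1$, $d_{w_1},d_{w_2}\ge 0$ (respectively $d_u\ge1$, $d_{z_1},d_{z_2}\ge0$), exactly as in the proofs of Lemmas~\ref{lem-bm2} and~\ref{lem-bm5}.
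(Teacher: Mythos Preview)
Your proposal is correct and follows essentially the same approach as the paper: perform the edge-switch, note that one branching vertex is destroyed and one created so that $T'\in\mathcal{CT}_{n,b}^*$, and then compute $W_p(T)-W_p(T')$ from the edge-sum formula. The paper's proof simply records the resulting expressions $W_p(T)-W_p(T')=2-(d_{w_1}-1)-(d_{w_2}-1)-2(d_u-1)$ for part (a) and $W_p(T)-W_p(T')=-6-2(d_u-1)-(d_{z_1}-1)-(d_{z_2}-1)+2(d_x-1)$ for part (b) (where $x$ is the third neighbour of $z$) and asserts nonpositivity, omitting the membership verification that you spell out.
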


\begin{proof}
a) It holds,
\begin{align}\label{EqAA1b}
W_{p}(T)- W_{p}(T^{\prime})=&\,  2-(d_{w_1}-1)-(d_{w_2}-1)-2(d_{u}-1).
\end{align}
For $n\ge 7$ and $b\ge 1$, Eq(1) gives $W_{p}(T)- W_{p}(T^{\prime})\le 0$ or $W_{p}(T)\le W_{p}(T^{\prime})$\,.\\

b) Let $x$ be the neighbor of $z$ different from $z_1$ and $z_2$, then

\beqs
W_{p}(T)- W_{p}(T^{\prime})&=& -6-2(d_u-1)-(d_{z_1}-1)-(d_{z_2}-1)+2(d_x-1)\\
&\le& 0
\eeqs
Hence $W_{p}(T)\le W_{p}(T^{\prime})$\,.
\end{proof}

Thus, by Lemmas \ref{lem-bm1}-\ref{lem-bm4}, in order to maximize Wiener polarity index $W_{p}$ first we replace the vertices of degree 2 by the vertices of degree 4 using Lemma \ref{lem-bm4} so that the presence of vertex of degree 2 ensures that there is no vertex of degree 3, then place the vertices of degree 3 between pendent vertices and vertices of degree 4 so that if there is a pendent vertex adjacent to a vertex of degree 4, then there is no adjacent pair of vertices of degree 3, also there is no vertex of degree 3 between any two vertices of degree 4. Note that if $b=\frac{n}{2}-1$ or $n=2b+2$ the tree $BT_{max}$ contains only pendent vertices and branching vertices of degree 3 and if $b < \frac{n}{2}-1$, then there is at least one vertex of degree 4. The construction of maximal tree described above implies that when $\frac{n-2}{3}\le b< \frac{n}{2}-1$  or $2b+2< n\le 3b+2$ there is no vertex of degree 2 or $n_2=0$, $n_3=3b-n+2$ and $n_4=n-2b-2$.\\
Denote $\mathcal{BT}_{1}(n,b)\subseteq \mathcal{CT}_{n,b}^*$, for $\frac{n-2}{3}\le b< \frac{n}{2}-1,$ the set of all $n$-vertex chemical trees with degree sequence $(\underbrace{4,4,...,4}_{n-2b-2},\underbrace{3,3,...,3}_{3b-n+2},\underbrace{1,1,...,1}_{n-b})$, whose vertices of degree 3 are placed as described above.
Let $\theta$ denotes the number of non-branching neighbors of vertices of degree 4 in $\mathcal{BT}_{1}(n,b)$, or $\theta=2n_4+2=2n-4b-2$. Note that if $n_3< \theta$ or $\frac{n-2}{3}\le b< \frac{3n-4}{7}$, then there are not enough vertices of degree 3 to be placed between the pendent vertices and vertices of degree 4, in this case we get $x_{3,4}=n_3=3b-n+2$, $x_{4,4}=n_4-1=n-2b-3$, $x_{3,3}=0$. Now if $n_3 \ge \theta +1$ or $\frac{3n-4}{7} \le b < \frac{n}{2}-1$, then $x_{3,4}= \theta=2n-4b-2$, $x_{4,4}=n_4-1=n-2b-3$, $x_{3,3}=n_3-\theta=7b-3n+4$.\\

Thus, using the construction of maximal tree in $\mathcal{CT}_{n,b}^*$ as described above if $b< \frac{n-2}{3}$, then $n_{3}=0$ and $n_4=b$. By Lemmas \ref{lem-bm1}and \ref{lem-bm3}, in order to maximize $W_{p}$ we need to place the vertices of degree 2 between pendent vertices and vertices of degree 4 so that if there is any pendent vertex adjacent to a vertex of degree 4, then there is no vertex of degree 2 between two vertices of degree 4 and there is no pendent path of length greater than 2. Denote $\mathcal{BT}_{2}(n,b)\subseteq \mathcal{CT}_{n,b}^*$, for $1\leq b < \frac{n-2}{3},$ the set of all $n$-vertex chemical trees with degree sequence $(\underbrace{4,4,...,4}_{b},\underbrace{2,2,...,2}_{n-3b-2},\underbrace{1,1,...,1}_{2b+2})$, whose vertices of degree 2 are placed as described above. Let $\theta$ denotes the number of non-branching neighbors of vertices of degree 4.\\
Note that if $n_2 < \theta$ or $\frac{n-4}{5} < b < \frac{n-2}{3}$, then there are not enough vertices of degree 2 to be placed between the vertices of degree 4 and pendent vertices and $x_{4,4}=b-1$, $x_{2,4}=n_2=n-3b-2$, $x_{2,2}=0$. Now if $1\le b \le \frac{n-4}{5}$, then $x_{4,4}=b-1$, $x_{2,4}=\theta= 2b+2$, $x_{2,2}=n-5b-4$.

By previous considerations, the structure of a chemical tree that maximizing $W_{p}$ is completely determined, which enables us to state the following result.

\begin{thm}
Let $BT\in \mathcal{CT}_{n,b}^*,$ where $1\leq b \leq \frac{n-4}{2}$, then
\[
W_{P}(BT)
\leq
\begin{cases}
n+10b-7 &\text{,\ $1\leq b \le \frac{n-4}{5}$}\\
3n-15 &\text{,\ $\frac{n-4}{5}< b < \frac{3n-4}{7}$}\\
9n-14b-23 &\text{,\ $\frac{3n-4}{7} \leq b < \frac{n}{2}-1.$}\\
\end{cases}
\]
The equality holds if $BT\in \mathcal{BT}_{1}(n,b)$ for $\frac{n-2}{3}\le b< \frac{n}{2}-1$, or $BT \in \mathcal{BT}_{2}(n,b)$ for $1\leq b < \frac{n-2}{3}$.
\end{thm}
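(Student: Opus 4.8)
The plan is to reduce everything to an edge--partition computation on the two explicit families identified above. By Lemmas~\ref{lem-bm1}--\ref{lem-bm4} and the constructive discussion preceding the statement, the maximizer $BT_{max}$ is forced to lie in $\mathcal{BT}_{2}(n,b)$ when $1\le b<\frac{n-2}{3}$ and in $\mathcal{BT}_{1}(n,b)$ when $\frac{n-2}{3}\le b<\frac{n}{2}-1$; moreover, within each family the numbers $n_i$ of vertices of degree $i$, and then the numbers $x_{i,j}$ of edges joining a degree-$i$ vertex to a degree-$j$ vertex, are completely pinned down once one decides whether the degree-$4$ vertices have enough degree-$2$ (respectively degree-$3$) neighbours to fill all $\theta=2n_4+2$ of their non-branching incidences. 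This dichotomy splits $\mathcal{BT}_{2}$ at $b=\frac{n-4}{5}$ and $\mathcal{BT}_{1}$ at $b=\frac{3n-4}{7}$, producing exactly the four regimes appearing in the statement.

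First I would record that, since every edge incident with a pendent vertex contributes $0$ to $W_p(T)=\sum_{uv\in E(T)}(d_u-1)(d_v-1)$, for any chemical tree $T$ one has
\[
W_p(T)=9\,x_{4,4}+6\,x_{3,4}+4\,x_{3,3}+3\,x_{2,4}+2\,x_{2,3}+x_{2,2},
\]
and that $x_{2,3}=0$ in both of our families (one has no degree-$2$ vertices, the other no degree-$3$ vertices). Substituting the values of the $x_{i,j}$ recorded above and simplifying then gives: $9(b-1)+3(2b+2)+(n-5b-4)=n+10b-7$ for $1\le b\le\frac{n-4}{5}$; $9(b-1)+3(n-3b-2)=3n-15$ for $\frac{n-4}{5}<b<\frac{n-2}{3}$; $9(n-2b-3)+6(3b-n+2)=3n-15$ for $\frac{n-2}{3}\le b<\frac{3n-4}{7}$; and $9(n-2b-3)+6(2n-4b-2)+4(7b-3n+4)=9n-14b-23$ for $\frac{3n-4}{7}\le b<\frac{n}{2}-1$. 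I would also observe that the two middle subcases agree at $b=\frac{n-2}{3}$, where both families collapse to the tree all of whose non-pendent vertices have degree $4$, so the piecewise formula is internally consistent.

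Finally, since $BT_{max}$ maximizes $W_p$ over $\mathcal{CT}_{n,b}^*$, every $BT$ in that class satisfies $W_p(BT)\le W_p(BT_{max})$, which is one of the three displayed quantities according to the range of $b$; and because $W_p$ depends only on the edge-partition $\{x_{i,j}\}$, which is identical for all members of $\mathcal{BT}_{1}(n,b)$ (respectively $\mathcal{BT}_{2}(n,b)$), equality holds precisely for the trees listed. The only delicate points inside this final argument are bookkeeping ones: verifying the claimed values of the $x_{i,j}$ at the threshold values of $b$ (in particular checking that the ``not enough'' cases really give $x_{3,3}=0$, respectively $x_{2,2}=0$, and that $x_{1,4}$ stays nonnegative), and noting that the endpoint $b=\frac{n}{2}-1$, where $n_4=0$ and all non-pendent vertices would have degree $3$, lies outside the stated range and is handled separately. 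The genuinely substantive content, namely that the greedy construction --- trade each degree-$2$ vertex for a degree-$4$ vertex via Lemma~\ref{lem-bm4} whenever a degree-$3$ vertex is present, then seat the leftover degree-$3$ or degree-$2$ vertices between pendent vertices and degree-$4$ vertices --- cannot be improved, has already been established in Lemmas~\ref{lem-bm1}--\ref{lem-bm4}, so here there is no remaining obstacle beyond careful arithmetic.
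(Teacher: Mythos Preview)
Your proposal is correct and follows essentially the same approach as the paper: both rely on Lemmas~\ref{lem-bm1}--\ref{lem-bm4} together with the constructive discussion preceding the theorem to force $BT_{max}$ into $\mathcal{BT}_{1}(n,b)$ or $\mathcal{BT}_{2}(n,b)$, and then compute $W_p$ from the recorded edge-partition data $x_{i,j}$. The paper merely asserts that ``the Wiener Polarity index of an arbitrary tree belonging to these sets can easily be calculated,'' whereas you actually carry out the four substitutions and verify the boundary consistency; this is added detail rather than a different method.
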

\begin{proof}
According to previous considerations, the tree from $\mathcal{BT}_{n,b}$ which maximizes $W_{p}$ belongs to $\mathcal{BT}_{1}(n,b)$ for $\frac{n-2}{3}\le b< \frac{n}{2}-1$ and $\mathcal{BT}_{2}(n,b)$ for $1\leq b < \frac{n-2}{3}$. The Wiener Polarity index of an arbitrary tree belonging to these sets can easily be calculated using above described results, which
completes the proof.

\end{proof}

\section{On the minimum Wiener Polarity index of chemical trees with fixed number of branching vertices}

In this section the influence of the number of branching vertices in a chemical tree on Wiener polarity index is investigated and the lower bound on $W_{P}$ for $n$-vertex chemical trees from $\mathcal{CT}_{n,b}^*$ containing $b$ branching vertices is found. Let $CT$ be a tree with minimum Wiener polarity index among the $n$-vertex trees from $\mathcal{CT}_{n,b}^*$. Then the following observations hold.\\

\begin{lem}\label{bn1}
The tree $CT\in \mathcal{CT}_{n,b}^*$ for $b\ge 2$ does not contain any pendent path of length greater than 1.
\end{lem}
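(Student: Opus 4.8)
The plan is to argue by contradiction with a single edge-switching operation, in the same spirit as the transformations used above. Suppose $CT$ contains a pendent path $P=v_0v_1\cdots v_k$ with $k\ge 2$, where $v_0$ is pendent, $v_1,\dots,v_{k-1}$ have degree $2$, and $v_k$ is branching (so $3\le d_{v_k}\le 4$). The intuition is that the degree-$2$ vertices $v_1,\dots,v_{k-1}$ are placed inefficiently: the edge $v_{k-1}v_k$ costs $d_{v_k}-1\ge 2$ in $W_p$, whereas attaching $v_0$ directly to $v_k$ would cost nothing, while each degree-$2$ vertex, once inserted into an internal edge, costs only $+1$. So I would relocate all of $v_1,\dots,v_{k-1}$ onto an internal path.

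First I would use the hypothesis $b\ge 2$ to produce an internal path: pick two branching vertices at minimum mutual distance; no branching vertex lies strictly between them, so the path joining them is an internal path $Q=w_1z_1\cdots z_{m-1}w_2$ (with $m=1$ meaning $Q$ is the single edge $w_1w_2$). Since each $v_i$ with $1\le i\le k-1$ has a leaf on one side of it along $P$, none of these vertices can be an interior vertex of an internal path, so $Q$ meets $P$ in at most the endpoint $v_k$, and the operation below is in any case well defined. I would then form $CT'$ from $CT$ by deleting $v_0,\dots,v_{k-1}$, replacing the edge $w_1z_1$ (resp.\ $w_1w_2$ when $m=1$) by the path $w_1v_1v_2\cdots v_{k-1}z_1$ (resp.\ $w_1v_1\cdots v_{k-1}w_2$), and adding the pendent edge $v_0v_k$. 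A quick check gives $CT'\in\mathcal{CT}_{n,b}^*$: the order is unchanged, the degree of every vertex is preserved (the $v_i$ with $1\le i\le k-1$ again have degree $2$; $v_k$ loses $v_{k-1}$ but gains $v_0$; $w_1$, and $z_1$ or $w_2$, are unaffected in degree), so the set of branching vertices is exactly the same and the maximum degree is still at most $4$.

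Finally I would evaluate $W_p(CT)-W_p(CT')$ by comparing only the edges that changed, using $W_p(T)=\sum_{uv\in E(T)}(d_u-1)(d_v-1)$. The edges of $P$ contribute $k+d_{v_k}-3$ in $CT$ and are replaced by the single edge $v_0v_k$, contributing $0$, in $CT'$; the edges coming from $Q$ contribute $k-1$ more in $CT'$ than in $CT$ when $m\ge 2$, and at most $k-2$ more when $m=1$ (here one uses $d_{w_1},d_{w_2}\le 4$); all other edges are untouched. Hence $W_p(CT')-W_p(CT)\le (k-1)-(k+d_{v_k}-3)=2-d_{v_k}\le-1<0$, contradicting the minimality of $CT$.

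The substantive work is bookkeeping rather than ideas: one must check the disjointness of $Q$ from the interior of $P$, treat the degenerate case $m=1$ separately, and verify the local contribution counts uniformly over $k=2$ versus $k\ge 3$ and over $d_{v_k}\in\{3,4\}$. None of these is deep, but all must be handled carefully. It is also worth noting explicitly why $b\ge 2$ is essential: when $b=1$ there is no internal path to absorb the relocated vertices, and indeed the minimal tree for $b=1$ genuinely contains a long pendent path.
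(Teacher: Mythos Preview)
Your argument is correct: the operation you describe preserves all vertex degrees (hence stays in $\mathcal{CT}_{n,b}^*$), the non-overlap of the interior of $P$ with $Q$ is justified, and your edge-by-edge count gives $W_p(CT')<W_p(CT)$ in both the $m=1$ and $m\ge 2$ cases. So the contradiction goes through.

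The paper, however, reaches the same conclusion with considerably less bookkeeping. Instead of locating an arbitrary internal path $Q$ and transplanting $v_1,\dots,v_{k-1}$ into it, the paper observes that the branching endpoint $v=v_k$ of $P$ itself must have a neighbour $w$ lying on some internal path (this is where $b\ge 2$ is used). Writing $P=u_0u_1\cdots u_t v$ with $t\ge 1$, the single swap
\[
T' \;=\; CT - \{u_{t-1}u_t,\, vw\} + \{vu_0,\, u_{t-1}w\}
\]
rotates the pendent chain so that $u_t$ becomes the leaf attached to $v$ and the remaining degree-$2$ vertices $u_0,\dots,u_{t-1}$ are slid in between $v$ and $w$. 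All degrees are preserved, and one line gives
\[
W_p(CT)-W_p(T') = (d_w-1)(d_v-2) > 0.
\]
Because $Q$ is chosen incident to $v$, there is no need to worry about overlap with $P$, no $m=1$ versus $m\ge2$ split, and no separate treatment of $k=2$. Your route and the paper's are the same in spirit (relocate the surplus degree-$2$ vertices from a pendent path into an internal path), but the paper's local choice of $w$ adjacent to $v$ collapses all of your case analysis into a single identity.
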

\begin{proof}
Suppose, on the contrary, that there is a path $P:u_{0}u_{1}u_{2}...u_{t-1}u_{t}v$ with $t\geq 1$ in $CT$ where $d_v\ge 3$ and $d_{u_{0}}=1$, $d_{u_{1}}=d_{u_{2}}=...=d_{u_{t}}=2$. Let $w$ be a neighbor of  $v$ lying on some internal path (the existence of $w$ is confirmed because of the assumption $b\ge 2$).

%\begin{figure}[ht]
 %\centering
  %\includegraphics[width=0.50\textwidth]{Fig5.pdf}
   %\caption{Fig5 }
    %\label{Fig5}
     %\end{figure}

Let $T^{\prime}=CT-\{u_{t-1}u_{t},vw\}+ \{vu_{0},u_{t-1}w\}$. It is observed that $T^{\prime}\in \mathcal{CT}_{n,b}^*$\,\,.
As $d_{w}\geq 2$ and  $d_{v}\geq 3$, we have
\beqs
W_{p}(CT)-W_{p}(T^{\prime})&=& (d_{w}-1)(d_{v}-2)> 0,
\eeqs
a contradiction to the minimality of $CT$.
\end{proof}

\begin{lem}\label{bn2}
If the tree $CT \in \mathcal{CT}_{n,b}^*$ contains an internal path of length 1, then it does not contain an internal path of length greater than 2.
\end{lem}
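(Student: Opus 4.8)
\textbf{Proof proposal for Lemma~\ref{bn2}.}
The plan is to argue by contradiction in the same edge-swapping style used throughout the section: assume that $CT$ contains both an internal path of length $1$, say on the edge $xy$ with $d_x,d_y\ge 3$, and an internal path $P\colon v_0v_1\cdots v_k$ of length $k\ge 3$, where $d_{v_0},d_{v_k}\ge 3$ and $d_{v_1}=\cdots=d_{v_{k-1}}=2$. The goal is to construct a new chemical tree $T'\in\mathcal{CT}_{n,b}^*$ with $W_p(T')<W_p(CT)$, contradicting the minimality of $CT$.

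The natural move is to relocate one of the degree-$2$ vertices of the long internal path $P$ onto the short internal path $xy$, effectively lengthening $xy$ (which costs nothing in $W_p$, since subdividing an edge both of whose endpoints have degree $2$ after the move does not create any new pair of degree contributions) while shortening $P$. Concretely, I would take a pendant-side surgery: delete an edge $v_iv_{i+1}$ in the interior of $P$ and an edge incident to $x$ (or $y$), say $xy$ itself, and reconnect so that the removed degree-$2$ vertex sits between $x$ and $y$; one must choose the indices so that $T'$ is still a tree, still has maximum degree $\le 4$, and still has exactly $b$ branching vertices — the branching vertices are untouched since we only move degree-$2$ vertices and never change the degree of any $v_0,v_k,x,y$. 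Then $W_p(CT)-W_p(T')$ should reduce, after cancellation, to a sum of terms of the form $(d_a-1)(d_b-1)$ with at least one endpoint a genuine branching vertex, hence strictly positive; the edge count identity $W_p(T)=\sum_{uv\in E(T)}(d_u-1)(d_v-1)$ makes this bookkeeping routine.

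The main obstacle I anticipate is making the surgery legal in all degenerate configurations: the short internal path $xy$ and the long path $P$ may share an endpoint (e.g.\ $x=v_0$), or $x$ may already be adjacent to two degree-$2$ vertices so that moving another one in would be fine but the reconnection must avoid creating a multi-edge, or $b=2$ so that $x,y,v_0,v_k$ are not all distinct. I would handle this by a small case split on the relative position of the two internal paths (disjoint; sharing exactly one endpoint; $P$ itself containing $x$ or $y$ as an internal-path endpoint), and in each case pick the specific edges to delete and add so that the resulting graph is visibly a chemical tree in $\mathcal{CT}_{n,b}^*$. A cleaner alternative worth trying first: instead of moving a degree-$2$ vertex \emph{onto} $xy$, contract the short internal path by the swap $T'=CT-\{v_iv_{i+1},xy\}+\{xv_i,yv_{i+1}\}$ type move that merges the "slack" of $P$ into a structure adjacent to $x$ and $y$ — but experience from Lemma~\ref{bn1} suggests the endpoint-identification corner cases are unavoidable regardless, so the case analysis is where the real work lies. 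The $W_p$ difference computation itself, by contrast, is a one-line expansion in each case.
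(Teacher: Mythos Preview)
Your plan is the paper's: relocate a degree-$2$ vertex from the long internal path so that it subdivides the edge $xy$. The paper avoids all of the case analysis you anticipate by the specific choice $T'=CT-\{u_1u_2,\,u_2u_3,\,xy\}+\{u_1u_3,\,xu_2,\,u_2y\}$, where $u_1$ is a branching endpoint of the long path and $u_2,u_3$ its first two (degree-$2$) successors; since $d_{u_2}=d_{u_3}=2$, the difference collapses to $W_p(CT)-W_p(T')=(d_x-2)(d_y-2)\ge 1$ uniformly, and the swap remains a valid tree in $\mathcal{CT}_{n,b}^*$ even when the two internal paths share an endpoint --- so no case split is needed (and your ``cleaner alternative'' swap $-\{v_iv_{i+1},xy\}+\{xv_i,yv_{i+1}\}$ should be discarded, as it can fail to produce a tree).
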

\begin{proof}
Suppose, on the contrary, that there is an internal path $u_1u_2\cdots u_s$ of length at least 3 in $CT$ provided that $u_1$ and $u_s$ are branching vertices, let there also exists a pair of adjacent, branching vertices $u$ and $v$ in $CT$. Let $T^{\prime}=CT-\{u_1u_2,u_2u_3,uv\}+\{u_1u_3,uu_2,u_2v\}$,
then ${T^{\prime}}\in \mathcal{CT}_{n,b}^*$ and
\beqs
W_{p}(CT)-W_{p}(T^{\prime})&=& d_{u}d_{v}-2d_{u}-2d_{v}+4,
\eeqs
which is positive because the function $f$ defined by $f(x,y)=xy-2x-2y+4$, with $3\le x,y\le4$, is increasing in both $x$ and $y$, and hence we have $W_{p}(CT)>W_{p}(T^{\prime})$, a contradiction to the choice of $CT$
\end{proof}

\begin{lem}\label{bn3}
Let $u$ be a vertex of degree 4 in a tree $T\in \mathcal{CT}_{n,b}^*$ with a non-pendent neighbor $u_1$, and a neighbor $u_2$ (different from $u_1$) such that a pendent vertex $w_1$ is connected to $u$ via $u_2$ and $N(w_1)=w_2$. Let $T^{\prime}$ be a tree obtained by deleting the edge $uu_1$ and adding a new edge $w_1u_1$(Fig). Then $T^{\prime}\in \mathcal{CT}_{n,b}^*$ and $W_{p}(T) \ge W_{p}(T^{\prime})$.

 %\begin{figure}[ht]
  %\centering
   %\includegraphics[width=0.50\textwidth]{Fig6.pdf}
    %\caption{Fig6 }
     %\label{Fig6}
      %\end{figure}

\end{lem}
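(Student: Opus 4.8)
The plan is to verify first that $T'$ again belongs to $\mathcal{CT}_{n,b}^{*}$, and then to compare $W_{p}(T)$ with $W_{p}(T')$ edge by edge through the identity $W_{p}(T)=\sum_{xy\in E(T)}(d_{x}-1)(d_{y}-1)$.

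\emph{Membership.} Since $w_{1}$ is joined to $u$ through $u_{2}$ and $u_{2}\neq u_{1}$, deleting $uu_{1}$ splits $T$ into exactly two components, one containing $u$ (hence also $u_{2}$ and $w_{1}$) and the other containing $u_{1}$; adding the edge $w_{1}u_{1}$ reconnects them, so $T'$ is a tree on the same $n$ vertices. In passing from $T$ to $T'$, the degree of $u$ drops from $4$ to $3$, the degree of $w_{1}$ rises from $1$ to $2$, the degree of $u_{1}$ is unchanged, and every other degree is unchanged. Hence the maximum degree of $T'$ is at most $4$, the vertex $u$ remains a branching vertex, $w_{1}$ remains non-branching, and $T$ and $T'$ share the same set of branching vertices; therefore $T'\in\mathcal{CT}_{n,b}^{*}$.

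\emph{The inequality.} Because $u_{2}$ lies on the $u$--$w_{1}$ path and $u_{2}\neq w_{1}$, the vertex $u_{2}$ is not pendent, so $d_{u_{2}}\ge 2$; also $d_{u_{1}}\ge 2$ since $u_{1}$ is non-pendent. The only edges whose contribution to $W_{p}$ changes are the deleted edge $uu_{1}$, the new edge $u_{1}w_{1}$, the edge $w_{1}w_{2}$ (because $d_{w_{1}}$ changed), and the three edges joining $u$ to its neighbors other than $u_{1}$ (because $d_{u}$ changed), one of which is $uu_{2}$. Writing $x,y$ for the two neighbors of $u$ other than $u_{1},u_{2}$, I would separate two cases. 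If $w_{2}=u_{2}$, that is, the $u$--$w_{1}$ path is $u\,u_{2}\,w_{1}$, then the changes along $uu_{2}$ and $w_{1}w_{2}$ cancel, and a direct substitution into the edge formula gives $W_{p}(T)-W_{p}(T')=2(d_{u_{1}}-1)+(d_{x}-1)+(d_{y}-1)\ge 0$. If $w_{2}\neq u_{2}$, then $w_{2}$ has a neighbor on the $u$--$w_{1}$ path besides $w_{1}$, so $d_{w_{2}}\ge 2$, and the same bookkeeping yields $W_{p}(T)-W_{p}(T')=2(d_{u_{1}}-1)+(d_{u_{2}}-1)-(d_{w_{2}}-1)+(d_{x}-1)+(d_{y}-1)\ge 2+1-3=0$, using $d_{u_{1}}\ge 2$, $d_{u_{2}}\ge 2$ and $d_{w_{2}}\le 4$. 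In either case $W_{p}(T)\ge W_{p}(T')$.

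The delicate point is the case split around $w_{2}$: when $w_{2}=u_{2}$ the edge $w_{1}w_{2}$ is incident to a neighbor of $u$, so the degree change of $u$ and the degree change of $w_{1}$ meet along that same edge and must be combined rather than counted separately; and in the second case the estimate is tight (equality occurs when $d_{u_{1}}=d_{u_{2}}=2$, $d_{w_{2}}=4$ and $x,y$ are pendent), so the chemical-tree bound $d_{w_{2}}\le 4$ cannot be weakened. Everything else is a routine expansion of the edge formula.
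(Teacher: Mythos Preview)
Your edge-by-edge bookkeeping is the same idea as the paper's proof, and your computations in both of your sub-cases are correct. However, your case split is not the paper's, and it leaves one configuration uncovered.

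The paper splits on whether $u_{2}\neq w_{1}$ or $u_{2}=w_{1}$. You assert ``$u_{2}\neq w_{1}$'' as if it were part of the hypothesis, but the statement only says that $w_{1}$ is connected to $u$ \emph{via} $u_{2}$, and the paper explicitly treats the possibility that the pendent vertex $w_{1}$ is itself the neighbour $u_{2}$ (so that $w_{2}=u$). In that situation the edge $uu_{2}$ and the edge $w_{1}w_{2}$ are the \emph{same} edge $uw_{1}$, on which both $d_{u}$ and $d_{w_{1}}$ change; neither your Case~A formula (which needs $w_{2}=u_{2}$) nor your Case~B formula (derived assuming $uu_{2}$ and $w_{1}w_{2}$ are distinct) applies. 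Your Case~B estimate also relies on $d_{u_{2}}\ge 2$, which fails when $u_{2}=w_{1}$. The paper's computation for this case gives
\[
W_{p}(T)-W_{p}(T')=-2+2(d_{u_{1}}-1)+\sum_{x\in N(u),\,x\neq u_{1}}(d_{x}-1)\ge 0,
\]
using only $d_{u_{1}}\ge 2$. This case is not cosmetic: the Remark that follows uses the lemma to pull pendent vertices away from degree-$4$ vertices, and that requires precisely the situation where $w_{1}$ is adjacent to $u$.

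A minor side remark: your split into $w_{2}=u_{2}$ versus $w_{2}\neq u_{2}$ (within $u_{2}\neq w_{1}$) is not actually needed. When $w_{2}=u_{2}$ the edges $uu_{2}$ and $w_{1}u_{2}$ are still two different edges sharing the vertex $u_{2}$, not ``the same edge'' as you write; your Case~B formula is valid there as well, since it gives $(d_{u_{2}}-1)-(d_{w_{2}}-1)=0$ automatically. The paper handles all of $u_{2}\neq w_{1}$ with a single line.
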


\begin{proof}
It holds as if $u_2\neq w_1$, then we obtain
\beqs
W_{p}(T)-W_{p}(T^{\prime})&=& 2(d_{u_1}-1)+\sum_{x\in N(u), x\neq u_1}{(d_x-1)}-(d_{w_2}-1)\\
&\ge& 0,
\eeqs
which implies that $W_{p}(T) \ge W_{p}(T^{\prime})$.\\
Now if $u_2 = w_1$ i.e. $u=w_2$, then we obtain
\beqs
W_{p}(T)-W_{p}(T^{\prime})&=& -2+2(d_{u_1}-1)+\sum_{x\in N(u), x\neq u_1}{(d_x-1)}\\
&\ge& 0,
\eeqs
again implying that $W_{p}(T) \ge W_{p}(T^{\prime})$.
\end{proof}

\begin{remark}
Bearing in mind Lemma \ref{bn3}, it can be observed that by inserting the vertices of degree 2, all the vertices of degree 4 can be replaced by the vertices of degree 3 without loss of minimality of $W_p$ of the trees from $\mathcal{CT}_{n,b}^*$.
\end{remark}

Consequently, using Lemmas \ref{bn1},\ref{bn2} and \ref{bn3} a tree $CT\in \mathcal{CT}_{n,b}^*$ containing only vertices of degree 1, 2 or 3 can be constructed with $n_{4}=0$, $n_{3}=b$, $n_{2}=n-2b-2$ and $n_{1}=b+2$ where the vertices of degree 2 are placed between two vertices of degree 3 so that there is no vertex of degree 2 adjacent to a pendent vertex and if there is an internal path of length 1 then there is no internal path of length greater than 2, the remaining vertices of degree 2 are placed arbitrarily between any two vertices of degree 2.\\
Note that if $n_2< b-1$ or $n< 3b+1$, then there are not enough vertices of degree 2 to place between the vertices of degree 3 so we get $x_{2,3}=2n_2=2n-4b-4$, $x_{3,3}=b-1-n_2$ and $x_{2,2}=0$. If $n_2\ge b-1$ or $n\ge 3b+1$, then $x_{2,3}=2(n_3-1)=2b-2$, $x_{3,3}=0$ and $x_{2,2}=n_2-(b-1)=n-3b-1$. Hence, the following result can be concluded
\begin{thm}
Let $CT\in \mathcal{CT}_{n,b}^*$, where $1\leq b \leq \frac{n}{2}-1,$ then
\[
W_{P}(CT)
\geq
\begin{cases}
b+n-5 &\text{,$n\geq 3b+1$}\\
4b-4  &\text{,$n<3b+1$.}
\end{cases}
\]
and equality holds if $CT\in \mathcal{B}_{n,b}$, where $\mathcal{B}_{n,b}$ is the set of $n$-vertex trees with the degree sequence $(\underbrace{3,3,...,3}_{b},\underbrace{2,2,...,2}_{n-2b-2},\underbrace{1,1,...,1}_{b+2})$.
\end{thm}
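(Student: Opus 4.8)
The plan is to mirror the structure used for Theorem~2.1 (the maximum case): first establish enough structural lemmas to pin down the degree sequence of the minimizer $CT$, then pin down how the degree-$2$ vertices must be distributed along the edges, and finally read off $W_p$ by a direct count of the edge types $x_{2,2}$, $x_{2,3}$, $x_{3,3}$. The structural input is already in place: Lemma~\ref{bn1} forces every pendent path to have length $1$ (so each branching vertex of degree~$3$ contributes its two pendent neighbours only when it is a ``leaf-branching'' vertex, and more importantly $n_1$ is controlled), Lemma~\ref{bn2} restricts the lengths of internal paths once a short one is present, and Lemma~\ref{bn3} together with the Remark lets us push all degree-$4$ vertices down to degree~$3$ without increasing $W_p$. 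Combining these, $CT$ may be taken to have $n_4=0$, and then the equalities $n_1+n_2+n_3=n$, $\sum_i i\,n_i = 2(n-1)$ together with ``$b$ branching vertices, all of degree $3$'' force $n_3=b$, $n_1=b+2$, $n_2=n-2b-2$, exactly the degree sequence defining $\mathcal{B}_{n,b}$.

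\smallskip

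Once the degree sequence is fixed, $W_p(CT)=\sum_{uv\in E}(d_u-1)(d_v-1)$ depends only on the multiset of adjacencies, i.e. on $x_{2,2}$ (weight $1$), $x_{2,3}$ (weight $2$), $x_{3,3}$ (weight $4$), since every edge incident to a pendent vertex contributes $0$. There are three linear constraints: the degree-$3$ vertices have $3b$ edge-endpoints total, of which $2(b+2)$ wait, more carefully: each degree-$3$ vertex sends $3$ edges; the $b+2$ pendent edges each hit exactly one degree-$3$ vertex (by Lemma~\ref{bn1} every pendent vertex is adjacent to a branching vertex), so $x_{2,3}+2x_{3,3}=3b-(b+2)=2b-2$; similarly counting edge-endpoints at degree-$2$ vertices gives $x_{2,3}+2x_{2,2}=2n_2=2n-4b-4$; and the total number of non-pendent edges is $x_{2,2}+x_{2,3}+x_{3,3}=(n-1)-(b+2)=n-b-3$. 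This $3\times 3$ system has a one-parameter family of nonnegative integer solutions, and $W_p = x_{2,2}+2x_{2,3}+4x_{3,3}$ is \emph{minimized} by making $x_{3,3}$ (the expensive edge type) as small as possible — equivalently, by inserting as many degree-$2$ vertices as possible on the internal paths between degree-$3$ vertices, which is precisely the placement prescribed in the paragraph before the theorem. Solving: if $n\ge 3b+1$ there are enough degree-$2$ vertices to take $x_{3,3}=0$, giving $x_{2,3}=2b-2$, $x_{2,2}=n-3b-1$, whence $W_p=(n-3b-1)+2(2b-2)=b+n-5$; if $n<3b+1$ we are forced to $x_{2,2}=0$, $x_{2,3}=2n_2=2n-4b-4$, $x_{3,3}=b-1-n_2=3b+1-n$, whence $W_p = 2(2n-4b-4)+4(3b+1-n) = 4b-4$.

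\smallskip

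The main obstacle is not the final arithmetic but the reduction step that legitimizes assuming $n_4=0$: Lemma~\ref{bn3} only asserts $W_p(T)\ge W_p(T')$ under the stated hypothesis (a degree-$4$ vertex with a non-pendent neighbour and a pendent vertex reachable through another neighbour), so one must argue that in a minimizer this configuration — or one reachable from it after harmlessly rearranging pendent paths and degree-$2$ vertices using Lemmas~\ref{bn1}–\ref{bn2} — is always available whenever a degree-$4$ vertex exists, and that iterating the transformation terminates (it does, since $\sum_v d_v^2$ strictly decreases each time a degree-$4$ vertex is split). I would also need to check the two transition values $n=3b+1$ (where both formulas agree, giving $4b-4=b+n-5$, consistent) and the extreme $b=\frac{n}{2}-1$ (where $n_2=0$ and the tree is a ``spider of triang-free'' shape, falling under the second case) to confirm the stated ranges are exhaustive and the bound is attained throughout; these are routine once the degree sequence and the extremal edge-distribution are in hand.
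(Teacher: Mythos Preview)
Your proposal is correct and follows essentially the same route as the paper: invoke Lemmas~\ref{bn1}--\ref{bn3} and the subsequent Remark to force the degree sequence $(\underbrace{3,\dots,3}_{b},\underbrace{2,\dots,2}_{n-2b-2},\underbrace{1,\dots,1}_{b+2})$, then determine the edge-type counts $x_{2,2},x_{2,3},x_{3,3}$ in the two regimes $n\ge 3b+1$ and $n<3b+1$, and read off $W_p$. Your linear-system formulation (three incidence equations in $x_{2,2},x_{2,3},x_{3,3}$, then minimise $x_{3,3}$) makes explicit what the paper compresses into ``can easily be calculated''; your observation that $W_p=x_{3,3}+n+b-5$ after elimination is a clean way to see why the split at $n=3b+1$ occurs and why the two formulas agree there. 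You also correctly flag the one genuine soft spot, shared with the paper: Lemma~\ref{bn3} has hypotheses (a non-pendent neighbour of the degree-$4$ vertex and a pendent vertex reachable through a different neighbour) that must be available each time the reduction is applied, and Lemma~\ref{bn1} is stated only for $b\ge 2$, so the boundary case $b=1$ needs a word on its own.
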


\begin{proof}
 By previous considerations, the tree from $\mathcal{CT}_{n,b}^*$ which gives a lower bound to $W_{p}$ belongs to $\mathcal{B}_{n,b}$ and the Wiener polarity index of an arbitrary tree from $\mathcal{B}_{n,b}$ can easily be calculated, which completes the proof.
\end{proof}

\section{On the maximum Wiener polarity index of chemical trees with given number of segments}

For $n\ge6$ and $3\le k\le n-1$, denote by $\mathcal{CT}_{n,k}$ by the class of all $n$-vertex chemical trees with $k$ segments.
We determine the structure of the chemical tree(s) from the class $\mathcal{CT}_{n,k}$ that maximizes the Wiener polarity index $W_{P}$. We denote this extremal chemical tree by $CT_{max}.$

\begin{lem}\label{sm1} {\rm \cite{Noureen-20}}
The tree $CT_{max}\in \mathcal{CT}_{n,k}$ does not contain any internal path of length greater than 1.
\end{lem}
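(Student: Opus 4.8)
The plan is to mirror, for segments, the structural‑optimization strategy already carried out for branching vertices in Sections~2 and~3. Recall that the final statement to be proved is Lemma~\ref{sm1}, which asserts that $CT_{max}$ contains no internal path of length greater than $1$; since this lemma is cited from \cite{Noureen-20}, the natural ``proof proposal'' is to reconstruct the short local‑switching argument that establishes it. I would argue by contradiction: assume $CT_{max}$ contains an internal path $P:u_0u_1\cdots u_t$ with $t\ge 2$, where $u_0$ and $u_t$ are branching vertices and $d_{u_1}=\cdots=d_{u_{t-1}}=2$.

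First I would isolate a minimal such configuration and set up a transfer move that preserves membership in $\mathcal{CT}_{n,k}$. The key point is that lengthening or shortening an internal path does \emph{not} change the number of segments $k$ (nor the order $n$ nor the maximum degree $4$), as long as we do not create or destroy a branching vertex; so the move should redistribute degree‑$2$ vertices along an internal path without touching the branching structure. Concretely, pick a neighbor $w$ of, say, $u_t$ that lies off the path $P$ (it exists because $u_t$ is branching), delete the edges $u_1u_2$ and $u_tw$, and add the edges $u_1w$ and $u_2u_t$ (or the analogous symmetric contraction that pushes a degree‑$2$ vertex from one internal path to another); the resulting tree $T'$ is still an $n$‑vertex chemical tree with exactly $k$ segments. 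Then I would compute $W_p(CT_{max})-W_p(T')$ using the edge‑contribution formula $W_p(T)=\sum_{uv\in E(T)}(d_u-1)(d_v-1)$: only the edges incident to the relocated vertex and to $u_1,u_t,w$ change their contributions, so the difference collapses to a short expression in $d_{u_0},d_{u_t},d_w$, each lying in $\{3,4\}$ (branching) while the intermediate vertices contribute $(2-1)(2-1)=1$ unchanged. The arithmetic should yield a strictly positive quantity — essentially because replacing a $2$–$2$ adjacency by a $2$–(branching) adjacency increases the relevant products — contradicting the maximality of $CT_{max}$.

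The step I expect to be the main obstacle is choosing the switch so that \emph{no} side effect on the segment count, the maximum degree, or connectedness occurs: if $u_t$'s only off‑path neighbors are all pendent, or if relocating the degree‑$2$ vertex would merge two segments into one or split one into two, the naive move fails and one must instead move the vertex toward a pendent path or into an existing long internal path whose existence must be separately justified. Handling these boundary cases — in particular the subcase where $b=1$ (only one branching vertex, so there is no internal path at all, making the statement vacuous) versus $b\ge 2$, and the subcase where every internal path already has length~$1$ except one — will require a small case analysis, but in each surviving case the sign computation is the same one‑line estimate. Once the contradiction is reached in every case, Lemma~\ref{sm1} follows; this lemma will then serve, together with analogues of Lemmas~\ref{lem-bm2}--\ref{lem-bm4} adapted to segments, as the starting point for pinning down the degree sequence of $CT_{max}$ in the remainder of Section~4.
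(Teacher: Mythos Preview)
The paper does not supply its own proof of Lemma~\ref{sm1}; the result is simply cited from \cite{Noureen-20}. So there is no in-paper argument to compare against, and I assess your sketch on its own merits.

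Your strategy is right in spirit---relocate a degree-$2$ vertex off a long internal path and onto a pendent path, keeping $n$, $n_2$ (hence $k=n-n_2-1$), and the maximum degree fixed---but the concrete switch you propose is not a valid tree operation. Deleting $u_1u_2$ and $u_tw$ and then adding $u_1w$ and $u_2u_t$ creates the cycle $u_2u_3\cdots u_tu_2$ when $t\ge 3$, and a loop at $u_t$ when $t=2$; in neither case do you obtain a tree in $\mathcal{CT}_{n,k}$. The workable move is instead: delete $u_0u_1$ and $u_1u_2$, add $u_0u_2$ (contracting the internal path by one), then pick a pendent edge $pp'$ with $d_p=1$, delete it, and add $pu_1$ and $u_1p'$. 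All vertex degrees are preserved, so $T'\in\mathcal{CT}_{n,k}$, and one computes
\[
W_p(T')-W_p(CT_{max})=(d_{u_0}-2)(d_{u_2}-2)+d_{p'}-2.
\]
This is strictly positive when $t=2$ (then $d_{u_2}\ge 3$ and $d_{p'}\ge 2$) or when some pendent vertex has a branching neighbour ($d_{p'}\ge 3$); but when $t\ge 3$ and every pendent path has length at least $2$ the difference is exactly $0$, and a single switch does not yield a contradiction. One then has to iterate the move---each step non-decreasing $W_p$---until the internal path reaches length $2$, after which the next step is strictly increasing. Your proposal anticipates ``boundary cases'' but does not isolate this one, and the ``one-line estimate'' you promise does not close the argument without the iteration.
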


\begin{lem}\label{sm2} {\rm \cite{Noureen-20}}
If the tree $CT_{max}\in \mathcal{CT}_{n,k}$ contains a pendent vertex adjacent to a branching vertex, then $CT_{max}$ does not contain a pendent path of length greater than 2.
\end{lem}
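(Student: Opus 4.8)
The final statement is Lemma~\ref{sm2}, which is cited from~\cite{Noureen-20}, so strictly speaking the paper gives no proof. Nonetheless, the plan below reconstructs the argument one would expect, mirroring the transformation technique used throughout Sections~2--3.

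\textbf{Overall strategy.} The plan is to argue by contradiction: assume $CT_{max}$ simultaneously contains a pendent vertex $p$ adjacent to a branching vertex $v$ and a pendent path $P = u_0 u_1 u_2 \cdots u_t$ of length $t \ge 3$, where $u_0$ is pendent, $d_{u_t} \ge 3$, and $d_{u_1} = \cdots = d_{u_{t-1}} = 2$. I would then exhibit an edge-swap producing a tree $T'$ in the same class $\mathcal{CT}_{n,k}$ (same order, same number of segments) with strictly larger Wiener polarity index, contradicting the maximality of $CT_{max}$. The key invariant to watch is the segment count: any local surgery must neither create nor destroy a segment, which is why one works with the far end $u_{t-2}u_{t-1}$ of the long pendent path rather than its branching endpoint.

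\textbf{The transformation.} Concretely, I would set
$$T' = CT_{max} - \{u_{t-2}u_{t-1}\} + \{p\, u_{t-1}\},$$
i.e.\ detach the tail $u_{t-1} u_t \cdots$ (which is just the $2$-valent subpath $u_{t-1}$, since $t-1 \ge 2$ forces $u_{t-1}$ to have degree $2$) and reattach it at the pendent neighbor $p$ of the branching vertex $v$. After the swap, $p$ has degree $2$ and $u_{t-2}$ becomes pendent; one checks $T' \in \mathcal{CT}_{n,k}$ because the pendent path through $v$ and the pendent path containing $u_0$ are merely shortened/lengthened, not merged or split, so the number of segments is unchanged, and all degrees stay $\le 4$. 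The $W_p$ difference localizes to the edges incident to $p$, $v$, $u_{t-2}$, and $u_{t-1}$. Using $W_p(T) = \sum_{xy \in E}(d_x-1)(d_y-1)$ and noting that only the contributions of the edges $pv$, $u_{t-2}u_{t-1}$ (removed) and $p\,u_{t-1}$ (added) change, a short computation should give
$$W_p(T') - W_p(CT_{max}) = (d_v - 1) - 1 = d_v - 2 \ge 1 > 0$$
since $d_v \ge 3$, yielding the contradiction. (The exact constant depends on whether $u_{t-1}$ keeps its degree; the point is that moving the $2$-valent tail from beside a degree-$2$ vertex $u_{t-2}$ to beside the higher-degree vertex $v$ can only increase $W_p$.)

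\textbf{Main obstacle.} The delicate part is \emph{not} the arithmetic but the bookkeeping that $T'$ still lies in $\mathcal{CT}_{n,k}$: one must verify that the branching vertex $v$ does not drop below degree $3$ (it does not, since only its pendent neighbor $p$ is affected and $p$ is replaced by an edge of the same multiplicity), that no new branching vertex is created, and that the segment incident to $v$ and the pendent segment ending at $u_0$ interact correctly — in particular that $p$ and $u_{t-2}$ do not coincide or lie on a common short path in a degenerate small-$n$ configuration, which is why the hypothesis $n \ge 6$ and the structural conclusions of Lemma~\ref{sm1} are invoked first. Once the class-membership of $T'$ is secured, the strict inequality $W_p(T') > W_p(CT_{max})$ finishes the proof.
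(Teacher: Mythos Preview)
Your overall plan is sound and in the spirit of the transformation lemmas used throughout the paper, but the explicit edge-swap you write down does not produce a tree. With
\[
T' = CT_{max} - \{u_{t-2}u_{t-1}\} + \{p\,u_{t-1}\},
\]
deleting $u_{t-2}u_{t-1}$ leaves the subpath $u_0u_1\cdots u_{t-2}$ as a separate component, while $u_{t-1}$ remains attached to the main tree through the edge $u_{t-1}u_t$ (which you never removed). Adding $p\,u_{t-1}$ then closes a cycle, since both $p$ and $u_{t-1}$ already lie in the large component, and the piece $u_0\cdots u_{t-2}$ is still stranded. So $T'$ is neither connected nor acyclic.

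What you evidently intend is to peel a single degree-$2$ vertex off the long pendent path and graft it onto $p$; for instance
\[
T' \;=\; CT_{max} - u_{t-2}u_{t-1} - u_{t-1}u_t + u_{t-2}u_t + p\,u_{t-1},
\]
or, more simply, $T' = CT_{max} - u_0u_1 + p\,u_0$. Either corrected version keeps the graph a tree, leaves all degrees at most $4$, preserves the segment count (one pendent segment shortens by one, the pendent edge $pv$ becomes a pendent path of length $2$), and yields precisely the gain $W_p(T') - W_p(CT_{max}) = d_v - 2 \ge 1$ that you quote. With the transformation fixed, the remainder of your argument goes through; since the paper itself only cites this lemma from \cite{Noureen-20} and supplies no proof, there is nothing further to compare against.
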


\begin{lem}\label{sm3} {\rm \cite{Noureen-20}}
If the tree $CT_{max}\in \mathcal{CT}_{n,k}$ contains a pendent vertex adjacent to a vertex of degree 4 then $CT_{max}$ does not contain any vertex of degree 3 adjacent to a vertex of degree 2.

\end{lem}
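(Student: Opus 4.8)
\textbf{Proof plan for Lemma~\ref{sm3}.}

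The plan is to argue by contradiction using a local edge‑switching (transformation) argument in the same spirit as Lemmas~\ref{lem-bm2}, \ref{lem-bm4} and \ref{bn3}. Suppose $CT_{max}$ contains a pendent vertex $p$ adjacent to a vertex $v$ with $d_v=4$, and also contains a vertex $z$ of degree $3$ adjacent to a vertex $y$ of degree $2$. First I would record what these two local configurations contribute to $W_p(CT_{max})=\sum_{uv\in E}(d_u-1)(d_v-1)$: the pendent edge $pv$ contributes $0$, while the neighbourhood of $z$ (with neighbours $y$ of degree $2$ and two further neighbours $z_1,z_2$) and of $y$ (with neighbours $z$ and one further vertex $x$) contributes a fixed amount depending on $d_{z_1},d_{z_2},d_x$.

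Next I would define the switched tree $T'$. The natural move is to detach the two non‑$y$ subtrees hanging at $z$ and reattach them at the pendent vertex $p$ (which raises $d_p$ from $1$ toward $3$ while lowering $d_z$ from $3$ to $1$), i.e.\ set
\[
T' \;=\; CT_{max} \;-\;\{zz_1,zz_2\}\;+\;\{pz_1,pz_2\}.
\]
One should check that $T'$ is again a chemical tree on $n$ vertices (degrees stay $\le 4$: $p$ becomes degree $3$, $v$ is unchanged, $z$ becomes a leaf), and — crucially — that $T'$ still has exactly $k$ segments: removing the two branchings' worth of structure at $z$ and creating a branching at $p$ must be segment‑neutral, which holds because $z$ was a branching vertex at the end of a pendent edge's segment through $v$ and $p$ was an internal leaf of a segment; absorbing $y$ into the now‑longer path through $z$ balances the count. (If this particular switch changes the segment number, the fallback is the alternative move of inserting the degree‑$2$ vertex $y$ on the pendent edge $pv$ and simultaneously shortening the $z$–side, as in Lemma~\ref{lem-bm4}; I would pick whichever version preserves $k$.)

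Then I would compute $W_p(CT_{max})-W_p(T')$. Only edges incident to $z,y,z_1,z_2,p,v$ change weight, so the difference is an explicit small expression in $d_{z_1},d_{z_2},d_x\in\{1,2,3,4\}$ and $d_p$‑before$=1$; after simplification it should take the form (up to signs matching the $W_p$ convention)
\beqs
W_p(CT_{max})-W_p(T') \;=\; 2-(d_{z_1}-1)-(d_{z_2}-1)+(\text{nonnegative terms from the }y\text{‑side}),
\eeqs
and one then shows this is $\le 0$, with the bound being strict when $d_{z_1}$ or $d_{z_2}$ exceeds $2$ — but since $z$ was assumed to have no pendent neighbour in the relevant configuration (otherwise Lemma~\ref{sm2}'s pendent‑path control already applies), $d_{z_1},d_{z_2}\ge 2$ forces strictness, giving $W_p(CT_{max})<W_p(T')$ and contradicting maximality. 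The main obstacle I anticipate is \emph{not} the arithmetic (which is routine) but the bookkeeping of segments: verifying that the chosen transformation lies inside $\mathcal{CT}_{n,k}$ rather than shifting to $\mathcal{CT}_{n,k\pm 1}$. I would handle that by a careful case split on whether $d_x=2$ (so $y$ sits in the interior of a segment) versus $d_x\ge 3$ (so $yz$ is itself a segment), adjusting the reattachment point by one vertex in the second case, exactly as the authors do implicitly when they "insert vertices of degree $2$" in the construction paragraphs.
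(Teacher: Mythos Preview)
The paper does not prove Lemma~\ref{sm3}; it is quoted from \cite{Noureen-20} without argument, so there is no ``paper's own proof'' to compare against. Your transformation strategy is the right genre and, in its main case, it actually works more cleanly than you expect: for $T'=CT_{max}-\{zz_{1},zz_{2}\}+\{pz_{1},pz_{2}\}$ the degree multiset is unchanged (one vertex goes $1\to 3$, another $3\to 1$), so the segment count $k=\tfrac{1}{2}(n_{1}+3n_{3}+4n_{4})$ is automatically preserved --- your anticipated ``main obstacle'' is not an obstacle at all for this particular move. Moreover the $W_{p}$-difference collapses to a constant: the removed edges $zz_{i}$ and the added edges $pz_{i}$ both carry weight $2(d_{z_{i}}-1)$, so only the reweighting of $zy$ (from $2$ to $0$) and of $pv$ (from $0$ to $6$) survives, giving $W_{p}(CT_{max})-W_{p}(T')=-4<0$ unconditionally. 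Your appeal to $d_{z_{1}},d_{z_{2}}\ge 2$ and to Lemma~\ref{sm2} is unnecessary (and the invocation of Lemma~\ref{sm2} is in any case off: that lemma controls pendent paths of length $>2$, not pendent neighbours of~$z$).

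The genuine gap is elsewhere: your move tacitly assumes that $p$ lies in the component of $z$ once $zz_{1}$ and $zz_{2}$ are deleted, i.e.\ that the $z$--$p$ path leaves $z$ through~$y$. If instead it leaves through $z_{1}$, adding $pz_{1}$ creates a cycle. The obvious fix is to relocate the \emph{other} two neighbours of $z$ (namely $y$ and $z_{2}$) to~$p$; this again preserves $k$, but now the difference becomes $W_{p}(CT_{max})-W_{p}(T')=2(d_{z_{1}}-1)-6$, which is only $\le 0$ and vanishes when $d_{z_{1}}=4$. That residual sub-case (a degree-$3$ vertex sandwiched between a degree-$2$ and a degree-$4$ vertex, with the pendent--degree-$4$ pair sitting on the degree-$4$ side) is not handled by your plan as stated and needs a separate transformation or an iteration argument with a suitable termination measure. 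So: right idea, wrong worry --- fix the connectivity/orientation issue rather than the segment bookkeeping.
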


\begin{lem}\label{sm3b}
Every path in the tree $CT_{max}\in \mathcal{CT}_{n,k}$ contains at most two vertices of degree 3.

\end{lem}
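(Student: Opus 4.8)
The plan is to argue by contradiction: suppose some path in $CT_{max}$ contains at least three vertices of degree $3$, say $z_1$, $z_2$, $z_3$ in the order they appear along the path. The strategy is to perform a local surgery that destroys one of these degree-$3$ vertices (turning it into a degree-$2$ or pendent vertex) while creating compensating structure elsewhere, in such a way that the number of segments $k$ is preserved and $W_p$ strictly increases, contradicting the maximality of $CT_{max}$. Since $W_p(T)=\sum_{uv\in E(T)}(d_u-1)(d_v-1)$, a vertex of degree $3$ sitting on a path contributes factors of $2$ to two incident edges, so relocating its "third branch" should be arranged to land on a vertex of higher degree or next to one, boosting the sum.

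First I would invoke Lemmas \ref{sm1}, \ref{sm2} and \ref{sm3} to normalize the local picture around $z_1, z_2, z_3$: by Lemma \ref{sm1} the internal paths joining consecutive branching vertices have length exactly $1$, so if $z_1$, $z_2$, $z_3$ are all branching we may assume the relevant internal paths between them are single edges (or we handle the case where degree-$2$ vertices intervene separately, using that such a path still forces the configuration to be tame). Then I would pick the "third neighbor" $w$ of the middle vertex $z_2$ — the neighbor not lying on the given path — and let $W$ be the component of $CT_{max}-z_2 w$ containing $w$. Define $T' = CT_{max} - z_2 w + z_1' w$ where $z_1'$ is chosen to be an appropriate vertex of degree $4$ (or a vertex forcing a length-$1$ internal path) elsewhere; the key bookkeeping is to check that moving the branch from $z_2$ to that location does not change the segment count — this works precisely because $z_2$ drops from degree $3$ to degree $2$ (merging two segments into one) while the receiving vertex's degree increases, splitting one segment into two, so $k$ is invariant. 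Then compute $W_p(CT_{max})-W_p(T')$; the dominant terms are the change at $z_2$'s edges, which lose a factor, versus the gain at the receiving vertex, and the inequality should come out strictly negative as in Lemmas \ref{lem-bm2} and \ref{lem-bm5}, using $d_w \ge 1$.

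The main obstacle I expect is the case analysis on how the three degree-$3$ vertices are distributed: whether they are mutually adjacent, separated by degree-$2$ vertices, or whether there are vertices of degree $4$ interspersed, and whether a pendent vertex is adjacent to one of them (which activates the hypotheses of Lemmas \ref{sm2} and \ref{sm3}). Handling the subcase where no vertex of degree $4$ is available as a "landing site" for the relocated branch is delicate — there one must instead relocate so as to create a pendent path of length $2$ or to place the branch adjacent to an existing degree-$3$ vertex, and verify the sign of $W_p(CT_{max})-W_p(T')$ separately, likely splitting according to whether $d_w=1$ or $d_w\ge 2$. I would organize the argument so that the generic case gives a clean strict inequality and the degenerate subcases are each dispatched by a short direct computation of the same flavor as in the earlier lemmas of this excerpt.
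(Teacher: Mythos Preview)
Your plan has a genuine gap in the central transformation. You propose $T'=CT_{max}-z_2w+z_1'w$ with $z_1'$ a vertex of degree~$4$; but then $d_{z_1'}(T')=5$ and $T'$ is no longer a chemical tree, so you have left the class $\mathcal{CT}_{n,k}$. More seriously, the segment bookkeeping you sketch is incorrect even if you change the landing site. In a chemical tree one has $k=2n_3+3n_4+1$, so a single edge--move that takes $z_2$ from degree~$3$ to degree~$2$ contributes $-2$ to $k$; the only way to compensate with a single reattachment is to land on a vertex of degree~$2$ (which then becomes degree~$3$, contributing $+2$). Landing on a degree~$3$ or degree~$4$ vertex gives $\Delta k=-1$ or an illegal tree, respectively, so your ``merge two, split one'' heuristic does not match the arithmetic. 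And if you do land on a degree--$2$ vertex $z_1'$, then by Lemma~\ref{sm1} every such vertex lies on a pendent path, so $z_1'$ has one neighbour of degree $\le 2$; a direct computation gives $W_p(T')-W_p(CT_{max})=(a+b)-(p_1+p_2)$ where $a,b$ are the degrees of the neighbours of $z_1'$ and $p_1,p_2$ those of the path--neighbours of $z_2$, and since $p_1,p_2\ge 3$ while $b\le 2$ this is not strictly positive in general. So the single--edge relocation cannot carry the argument.

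The idea you are missing, and which the paper uses, is to move \emph{two} edges at once. Take the path to have endpoints $u,w$ of degree~$3$ with the third degree--$3$ vertex $v$ strictly between them, let $w_1,w_2$ be the two neighbours of $w$ off the path, and set $CT'=CT_{max}-\{ww_1,ww_2\}+\{uw_1,vw_2\}$. This sends $(d_u,d_v,d_w)$ from $(3,3,3)$ to $(4,4,1)$, so $\Delta n_3=-3$, $\Delta n_4=+2$ and $\Delta k=2(-3)+3(2)=0$; the tree stays chemical and in $\mathcal{CT}_{n,k}$. The difference $W_p(CT_{max})-W_p(CT')$ then reduces to $-\sum_{x\in N(u)}(d_x-1)-\sum_{y\in N(v)}(d_y-1)+2(d_{w'}-1)-(d_{w_1}-1)-(d_{w_2}-1)$, where $w'$ is the path--neighbour of $w$, and Lemma~\ref{sm1} forces enough branching neighbours of $u$ and $v$ (splitting on whether $vw\in E(CT_{max})$) to make this strictly negative. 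That two--edge swap is the whole trick; once you have it, no elaborate case analysis on the distribution of the three degree--$3$ vertices is needed.
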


\begin{proof}

Suppose, on the contrary, that the tree $CT_{max}$ has a path containing three vertices of degree 3.
We assume that $u$-$w$ is a path in $CT_{max}$ containing exactly three vertices of degree 3, where $u$ and $w$ have degree 3. Let $v$ be the third vertex of degree 3 lying on the $u$-$w$ path and $w_1,w_2$ be the neighbors of $w$ which do not lie on the path $u$-$w$. Let $CT^{\prime}=CT_{max}-\{ww_1,ww_2\} + \{uw_1,vw_2\}$, then
\begin{align}\label{Eq-AA1}
W_{p}(CT_{max})-W_{p}(CT^{\prime})=&  -\sum_{x\in N(u)}(d_{x}-1) - \sum_{y\in N(v)}(d_{y}-1)+ 2\sum_{z\in N(w), \ z\neq w_1, \ z\neq w_2}(d_{z}-1)\nonumber\\
&-(d_{w_{1}}-1)-(d_{w_{2}}-1)
\end{align}

If $vw\in E(CT_{max})$ then by Lemma \ref{sm1}, the vertex $u$ has at least one branching neighbor and the vertex $v$ has at least two branching neighbors and hence from \eqref{Eq-AA1}, it follows that $W_{p}(CT_{max})-W_{p}(CT^{\prime})<0$, which is a contradiction to the definition of $CT_{max}$.

If $vw\not\in E(CT_{max})$ then by Lemma \ref{sm1}, the vertex $u$ has at least one branching neighbor and the vertex $v$ has at least two branching neighbors, one of which must have degree 4, and hence again from \eqref{Eq-AA1}, a contradiction is obtained.

\end{proof}

\begin{lem}\label{sm4}
If $T$ is a tree containing more than two vertices of degree 3, such that every path in $T$ contains at most two vertices of degree 3, and if $T$ has the maximum Wiener polarity index $W_{P}$ among all the members of the class $\mathcal{CT}_{n,k}$ then there exists a tree $T'\in \mathcal{CT}_{n,k}$ containing at most two vertices of degree 3 such that $W_{p}(T) \le W_{p}(T')$.
\end{lem}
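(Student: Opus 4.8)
The plan is to reduce to the case of at most two vertices of degree $3$ by a single edge-reconfiguration that lowers $n_3$ by exactly three without decreasing $W_p$, and then to iterate. First I would set up the structure. Let $S$ be the minimal subtree of $T$ containing all vertices of degree $3$. Minimality forces every leaf of $S$ to have degree $3$ in $T$, and a vertex of degree $3$ interior to $S$ would lie on a path between two other such vertices, contradicting the hypothesis; hence the vertices of degree $3$ of $T$ are exactly the leaves of $S$. Three facts follow, each using the path hypothesis: (i) the unique neighbour $a_0$ in $S$ of a degree-$3$ vertex $a$ has degree $2$ or $4$ (it is not a leaf of $S$, since $n_3\ge 3$, hence not of degree $3$, and it is not pendant); (ii) the other two neighbours $a_1,a_2$ of $a$ lie in components of $T-\{aa_1,aa_2\}$ that contain no vertex of degree $3$; (iii) no two leaves of $S$ are adjacent.

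Next I would identify the operation. Suppressing all degree-$2$ vertices of a tree $H\in\mathcal{CT}_{n,k}$ gives a tree on $n_1(H)+n_3(H)+n_4(H)$ vertices with $k$ edges, so the handshake identity yields $n_1(H)=n_3(H)+2n_4(H)+2$ and $k=2n_3(H)+3n_4(H)+1$; thus $n_2(H)=n-k-1$ is fixed and the only way to lower $n_3$ within $\mathcal{CT}_{n,k}$ is $(n_3,n_4)\mapsto(n_3-3,n_4+2)$, which raises $n_1$ by one. Accordingly, take a degree-$3$ vertex $a$ for which $d_{a_0}$ is smallest among all degree-$3$ vertices, pick two further degree-$3$ vertices $b,d$, and set $T'=T-\{aa_1,aa_2\}+\{ba_1,da_2\}$. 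Using (i)--(iii) one checks that $T'$ is again a chemical tree (connected, with $b,d$ now of degree $4$), that $ba_1$ and $da_2$ are not already edges, and that the two segments formerly incident with $a$ are simply relocated to $b$ and to $d$ while $a$ becomes pendant, so $T'$ has the same number of segments and lies in $\mathcal{CT}_{n,k}$.

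Then I would compare Wiener polarity indices. Only the edges at $a$, $b$, $d$ and the four swapped edges change weight, and a direct count gives
\[
W_p(T)-W_p(T')=2(d_{a_0}-1)-(d_{a_1}-1)-(d_{a_2}-1)-\sum_{x\in N(b)}(d_x-1)-\sum_{x\in N(d)}(d_x-1).
\]
By the choice of $a$, the $S$-neighbours of $b$ and of $d$ have degree at least $d_{a_0}$, so each of the two sums is at least $d_{a_0}-1$; dropping the non-positive terms $-(d_{a_1}-1)$ and $-(d_{a_2}-1)$, the right-hand side is at most $2(d_{a_0}-1)-(d_{a_0}-1)-(d_{a_0}-1)=0$, i.e. $W_p(T')\ge W_p(T)$. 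Since $T$ is a maximiser, $T'$ is a maximiser as well, so by Lemma~\ref{sm3b} it again carries at most two vertices of degree $3$ on every path, and if $n_3(T')>2$ the same construction applies to it. As $n_3$ drops by three at each step, after finitely many steps we reach a maximiser in $\mathcal{CT}_{n,k}$ with at most two vertices of degree $3$, which is the tree demanded by the statement.

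I expect the main obstacle to be twofold: pinning down that $(n_3,n_4)\mapsto(n_3-3,n_4+2)$ is the only $\mathcal{CT}_{n,k}$-admissible reduction of $n_3$, so that the operation is essentially forced; and, above all, the choice of which of $a,b,d$ to turn into a pendant vertex --- the ``smallest $d_{a_0}$'' rule is exactly what makes the displayed difference non-positive, and it rests on the structural description of $S$. Verifying that the segment count is preserved, and excluding the degenerate coincidences among $a_0,a_1,a_2,b,d$ and their neighbourhoods, both again reduce to (i)--(iii).
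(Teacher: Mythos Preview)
Your argument is correct and uses the same edge-reconfiguration as the paper: take three degree-$3$ vertices, delete the two ``outer'' edges at one of them, and re-attach them to the other two, turning one vertex of degree $3$ into a pendant vertex and the other two into degree-$4$ vertices. The resulting formula for $W_p(T)-W_p(T')$ coincides with the paper's equation~\eqref{Eq-AA1a}.

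There are two differences worth flagging. First, the paper does not select which of the three degree-$3$ vertices to collapse; instead it invokes Lemma~\ref{sm1} (no internal path of length $>1$ in a maximiser) to force the third neighbour $w_3$ of the collapsed vertex to have degree $4$, which turns the term $2(d_{w_3}-1)$ into the constant $6$ and makes the inequality immediate since each of $u,v$ already has a degree-$4$ neighbour on the $u$--$w$ and $v$--$w$ paths. Your ``smallest $d_{a_0}$'' rule achieves the same bound without Lemma~\ref{sm1}, which is a cleaner and more self-contained way to close the inequality. Second, the paper's proof stops after one step and does not explicitly address the case $n_3>5$; your iteration argument (using that $T'$ is again a maximiser, so Lemma~\ref{sm3b} applies to it) fills this gap. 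Your structural description via the minimal subtree $S$ also makes explicit why $w$ has two neighbours off both paths --- the paper relies implicitly on the fact that the hypothesis forbids the median of $u,v,w$ from being one of them.
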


\begin{proof}
Let $u,v,w\in V(T)$ such that $d_{u}=d_{v}=d_{w}=3$. Let $w_1$ and $w_2$ be those neighbors of $w$ which lie on neither of the paths $u$-$w$ and $v$-$w$. If $T'=T-\{ww_1,ww_2\} + \{uw_1,vw_2\}$, then we have an equation similar to \eqref{Eq-AA1}:
\begin{align}\label{Eq-AA1a}
W_{p}(T)-W_{p}(T')=&  -\sum_{x\in N(u)}(d_{x}-1) - \sum_{y\in N(v)}(d_{y}-1)+ 6
-(d_{w_{1}}-1)-(d_{w_{2}}-1)
\end{align}
Lemma \ref{sm1} guaranties that every vertex on the paths $u$-$w$ and $v$-$w$, except the vertices $u,v,w$, has degree 4 and hence from \eqref{Eq-AA1a}, it follows that $W_{p}(T)-W_{p}(T')\le0$.

\end{proof}

\begin{lem}\label{sm3-new}
If the tree $CT_{max}\in \mathcal{CT}_{n,k}$ contains a vertex $u$ of degree 3 then $u$ does not have more than one branching neighbor.

\end{lem}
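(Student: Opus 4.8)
The plan is to argue by the usual exchange/rerouting technique: assume the configuration forbidden by the lemma occurs in $CT_{max}$ and produce a tree $T'$ in the same class $\mathcal{CT}_{n,k}$ with strictly larger Wiener polarity index, contradicting maximality. So suppose some degree-$3$ vertex $u$ of $CT_{max}$ has at least two branching neighbours; I fix two of them, $x$ and $y$, and let $z$ be the third neighbour of $u$ (the cases $d_z\in\{3,4\}$ are precisely those in which $u$ has three branching neighbours). Since the path $xuy$ already contains the degree-$3$ vertex $u$, Lemma~\ref{sm3b} (every path has at most two degree-$3$ vertices) forbids $x$ and $y$ from both having degree $3$; say $d_x=4$. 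Lemma~\ref{sm1} forces every internal vertex of any pendant or internal path of $CT_{max}$ to have degree $2$, which, together with Lemmas~\ref{sm2}--\ref{sm3}, pins down the local picture around $x$, $y$ and $z$ (for instance, when $d_z=2$ the segment through $z$ is a pendant path all of whose internal vertices have degree $2$).

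The prototypical move relocates a pendant branch of the degree-$4$ vertex $x$ onto $u$: writing $p_1$ for the neighbour of $x$ lying on that branch, set $T'=CT_{max}-\{xp_1\}+\{up_1\}$, so that $u$ becomes a degree-$4$ vertex, $x$ a degree-$3$ vertex, and all other degrees are unchanged. Then $T'$ has order $n$, is again a chemical tree, and has the same number of segments: counting segment-endpoints yields $2k=n_1+3n_3+4n_4$, and the only net change is the $+1$ from $u$ going $3\to4$ cancelling the $-1$ from $x$ going $4\to3$. Using $W_p=\sum_{vw\in E}(d_v-1)(d_w-1)$, the edges $ux$ and $up_1$ retain their values (both $x$ and the new $u$ having degree $4$), the edges $uy$ and $uz$ gain $(d_y-1)$ and $(d_z-1)$, and the two surviving edges at $x$ other than $ux$ each lose the $(d-1)$ of their far endpoint, so that
\[
W_p(T')-W_p(CT_{max})=(d_y-1)+(d_z-1)-\sum_{w\in N(x)\setminus\{u,p_1\}}(d_w-1),
\]
and one checks this is strictly positive using $d_y\ge 3$ and the degree restrictions near $x$ supplied by Lemmas~\ref{sm1}--\ref{sm3}.

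The main obstacle is that this single move does not settle every configuration. If $x$ (and, symmetrically, $y$) has no pendant branch at all — i.e.\ all of its neighbours other than $u$ are branching — or if the surviving neighbours of $x$ carry so much degree that the displayed difference fails to be positive, a different rearrangement is needed, typically one relocating a short pendant path incident to $u$, to $y$, or to a nearby branching vertex; each such move must again be balanced against the identity $2k=n_1+3n_3+4n_4$ and examined separately for $d_z\in\{1,2\}$ and for $d_z\in\{3,4\}$. A secondary, routine difficulty is that when $n$ is small relative to $k$ some of the auxiliary vertices coincide with one another or with $x,y,z$, as in the two subcases of Lemma~\ref{bn3}; these degeneracies must be enumerated and the $W_p$-inequality re-verified in each, but they introduce no new idea. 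Running through the finitely many cases and producing in each a tree $T'\in\mathcal{CT}_{n,k}$ with strictly larger $W_p$ yields the contradiction, which proves the lemma.
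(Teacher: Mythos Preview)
Your proposal is a plan rather than a proof: you exhibit one candidate move (shifting a pendant branch from the degree-$4$ neighbour $x$ onto $u$) and then concede that it does not cover all configurations, promising that ``a different rearrangement is needed'' and that ``running through the finitely many cases'' will finish the job. But you never carry out that case analysis, and the missing cases are not marginal. In your displayed difference
\[
(d_y-1)+(d_z-1)-\sum_{w\in N(x)\setminus\{u,p_1\}}(d_w-1),
\]
the two summands on the right can each equal $3$ while $d_y=3$, $d_z=1$ are perfectly consistent with Lemmas~\ref{sm1}--\ref{sm3b}; the difference is then $-4$, so your move \emph{decreases} $W_p$. Worse, $x$ may have no pendant branch at all (all three non-$u$ neighbours branching), in which case the move is not even defined. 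You flag both obstacles yourself, but flagging is not resolving; as it stands the argument has a genuine gap.

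The paper avoids the case split entirely with a different single move. Instead of changing degrees, it keeps the degree sequence fixed and \emph{relocates} $u$: take the longest path $P=v_1\cdots v_r$ through $v=v_{i-1}$, $u=v_i$, $w=v_{i+1}$; by Lemma~\ref{sm3b} one may assume every branching vertex on the $w$-side of $P$ has degree $4$, so walking from $w$ toward the leaf $v_r$ one finds a degree-$4$ vertex $v_k$ with $d_{v_{k+1}}\le 2$. Excise $u$ from between $v$ and $w$ (making $vw$ an edge) and reinsert it between $v_k$ and $v_{k+1}$. All degrees are preserved, so the segment count is unchanged, and a short computation gives
\[
W_p(CT_{max})-W_p(CT')=-d_{v_{i-1}}+d_{v_{k+1}}\le -3+2<0,
\]
a clean contradiction with no residual cases. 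If you want to salvage your degree-swapping idea you would need to exhibit, for every failing configuration, an explicit alternative move and verify both the segment count and the strict $W_p$-increase; the paper's relocation trick sidesteps all of that.
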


\begin{proof}
Suppose, on the contrary, that $v$ and $w$ are two branching neighbors of $u$. Let $P=v_1v_2\cdots v_{i-1}v_iv_{i+1}\cdots v_r$ be the longest path containing $u$, $v$ and $w$, where $v_{i-1}=v$, $v_{i}=u$ and $v_{i+1}=w$.
By Lemma \ref{sm3b}, $P$ contains at most two vertices of degree 3, including $u$. If $P$ has two vertices of degree 3 including $u$ then, without loss of generality, we assume that $d_{v_j}=3$ for some $j$, $1\le j \le i-1$. Thus, there exists some $k$ with $i+1\le k \le r-1$ such that $v_k$ has exactly one branching neighbor and $d_{v_k}=4$. If $CT^{\prime}=CT_{max}-\{v_{i-1}v_i,v_{i}v_{i+1},v_{k}v_{k+1}\} + \{v_{i-1}v_{i+1},v_{k}v_{i},v_{i}v_{k+1}\}$ then bearing in mind the facts $d_{v_{k+1}}\le 2$, $d_{v_{i+1}}=4$ and $d_{v_{i-1}}=3$ or 4, we have
\begin{align*}
W_{p}(CT_{max})-W_{p}(CT^{\prime})=&  -d_{v_{i-1}} + d_{v_{k+1}} < 0\,,
\end{align*}
a contradiction to definition of the tree $CT_{max}$.

\end{proof}

Keeping in mind Lemmas \ref{sm1} and \ref{sm2}, we define a class $\mathcal{CT}_{1}(n,k)$
consisting of the trees with the degree sequence $(\underbrace{4,4,...,4}_{\frac{k-1}{3}},\underbrace{2,2,...,2}_{n-k-1},\underbrace{1,1,...,1}_{\frac{2k+4}{3}})$ such that every internal path has length 1 and if there is a starlike pendent edge then there is no pendent path of length greater than 2. Clearly, it holds $k\equiv$ 1 (mod 3) for such trees. Also, it is clear that if $n_1> n_2$ for such trees then $n < \frac{5k+7}{3}$ and it holds $x_{{1,2}}=x_{2,4}=n_2=n-k-1$, $x_{1,4}=n_1-n_2=\frac{5k-3n+7}{3}$, $x_{2,2}=0$ and $x_{4,4}=\frac{k-4}{3}$. If $n_1\le n_2$, then $n \ge \frac{5k+7}{3}$ and it holds $x_{{1,2}}=x_{2,4}=n_1=\frac{2k+4}{3}$, $x_{1,4}=0$, $x_{2,2}=n_2-n_1=\frac{3n-5k-7}{3}$ and $x_{4,4}=\frac{k-4}{3}$.

Now, bearing in mind Lemmas \ref{sm1}, \ref{sm2}, \ref{sm3} and \ref{sm3-new} we define a new class $\mathcal{CT}_{2}(n,k)$
consisting of the trees with the degree sequence $(\underbrace{4,4,...,4}_{\frac{k-3}{3}},3,\underbrace{2,2,...,2}_{n-k-1},\underbrace{1,1,...,1}_{\frac{2k+3}{3}})$ and satisfying the following properties\\
$\bullet$ every internal path has length 1,\\
$\bullet$ if there is a starlike pendent edge then there is no pendent path of length greater than 2,\\
$\bullet$ the vertex of degree 3 does not have more than one branching neighbor,\\
$\bullet$ if there is a pendent neighbor of a vertex of degree 4 then the vertex of degree 3 does not have any neighbor of degree 2.\\
Clearly, it holds $k\equiv$ 0 (mod 3) for the trees of $\mathcal{CT}_{2}(n,k)$. Denote by $\Theta$ the number of non-branching neighbors of the vertices of degree 4 in a tree $T \in\mathcal{CT}_{2}(n,k)$. It is clear $\Theta=2n_4 +1=\frac{2k-3}{3}$, for every tree in $\mathcal{CT}_{2}(n,k)$. If $n_2\le \Theta=\frac{2k-3}{3}$, that is, if $n\le \frac{5k}{3}$ then for the trees of $\mathcal{CT}_{2}(n,k)$ it holds $x_{2,2}=x_{2,3}=x_{3,3}=0$, $x_{2,4}=n_2=n-k-1$, $x_{3,4}=1$, $x_{4,4}=n_4-1=\frac{k-6}{3}$ and hence $W_p(T)=3n-15$ for every $T\in\mathcal{CT}_{2}(n,k)$ when $n\le \frac{5k}{3}$. Now, if $n_2= \Theta+1=\frac{2k-3}{3}+1$, that is, if $n= \frac{5k}{3}+1$ then for the trees of $\mathcal{CT}_{2}(n,k)$ we have $x_{2,2}=x_{3,3}=0$, $x_{2,3}=1$, $x_{2,4}=n_2-1=n-k-2$, $x_{3,4}=1$, $x_{4,4}=n_4-1=\frac{k-6}{3}$ and hence $W_p(T)=3n-16$ for every $T\in\mathcal{CT}_{2}(n,k)$ when $n= \frac{5k}{3}+1$. Finally, if $n_2> \Theta+1=\frac{2k-3}{3}+1$, that is, if $n> \frac{5k}{3}+1$ then for the trees of $\mathcal{CT}_{2}(n,k)$ we have $x_{2,2}=n_2-\Theta-2=\frac{3n-5k-6}{3}$, $x_{3,3}=0$, $x_{2,3}=2$, $x_{2,4}=\Theta=\frac{2k-3}{3}$, $x_{3,4}=1$, $x_{4,4}=n_4-1=\frac{k-6}{3}$ and hence $W_p(T)=\frac{3n+10k-39}{3}$ for every $T\in\mathcal{CT}_{2}(n,k)$ when $n> \frac{5k}{3}+1$.

Lastly, bearing in mind Lemmas \ref{sm1}, \ref{sm2}, \ref{sm3}, \ref{sm4} and \ref{sm3-new} we define another class $\mathcal{CT}_{3}(n,k)$
consisting of the trees with the degree sequence $(\underbrace{4,4,...,4}_{\frac{k-5}{3}},3,3,\underbrace{2,2,...,2}_{n-k-1},\underbrace{1,1,...,1}_{\frac{2k+2}{3}})$ and satisfying the following properties\\
$\bullet$ every internal path has length 1,\\
$\bullet$ if there is a starlike pendent edge then there is no pendent path of length greater than 2,\\
$\bullet$ every vertex of degree 3 has exactly one branching neighbor,\\
$\bullet$ if there is a pendent neighbor of a vertex of degree 4 then every vertex of degree 3 has exactly two pendent neighbors.\\
Clearly, it holds $k\equiv$ 2 (mod 3) for the trees of $\mathcal{CT}_{3}(n,k)$. If $k=5$, then $n_4=0$, for this particular value of $k$, if $n-k-1< 4$ that is, if $n<k+5$, then for the trees of $\mathcal{CT}_{3}(n,k)$ it holds $x_{1,2}=x_{2,3}=n-k-1$, $x_{2,2}=0$, $x_{1,3}=k+5-n$ and $x_{3,3}=1$ and hence $W_P(T)=2n-2k+2$ for every $T\in\mathcal{CT}_{3}(n,k)$ when $n<k+5$ . If $n\ge k+5$, then then for the trees of $\mathcal{CT}_{3}(n,k)$ it holds $x_{1,2}=x_{2,3}=4$, $x_{2,2}=n-k-5$, $x_{1,3}=0$ and $x_{3,3}=1$ and hence $W_P(T)=n-k+7$ for every $T\in\mathcal{CT}_{3}(n,k)$ when $n\ge k+5$ . Denote by $\Theta^{\prime}$ the number of non-branching neighbors of the vertices of degree 4 in a tree $T \in\mathcal{CT}_{3}(n,k)$. It is clear $\Theta^{\prime}=2n_4=2(\frac{k-5}{3})$, for every tree in $\mathcal{CT}_{3}(n,k)$. If $n_2\le \Theta^{\prime}$, that is, if $ n\le \frac{5k-7}{3}$, then for the trees of $\mathcal{CT}_{3}(n,k)$ it holds $x_{2,2}=x_{2,3}=x_{3,3}=0$, $x_{2,4}=n_2=n-k-1$, $x_{3,4}=2$, $x_{4,4}=n_4-1=\frac{k-8}{3}$ and hence $W_p(T)=3n-15$ for every $T\in\mathcal{CT}_{3}(n,k)$ when $ n\le \frac{5k-7}{3}$. Now if $\Theta^{\prime}+1\le n_2\le \Theta^{\prime}+3$ that is, $\frac{5k-4}{3}\le n\le \frac{5k+2}{3}$ then for the trees of $\mathcal{CT}_{3}(n,k)$ we have $x_{2,2}=x_{3,3}=0$, $x_{2,3}= n_2-\Theta^{\prime}=\frac{3n-5k+7}{3}$, $x_{2,4}=\Theta^{\prime}=2(\frac{k-5}{3})$, $x_{3,4}=2$, $x_{4,4}=n_4-1=\frac{k-8}{3}$ and hence $W_p(T)=\frac{6n+5k-52}{3}$ for every $T\in\mathcal{CT}_{3}(n,k)$ when $\frac{5k-4}{3}\le n\le \frac{5k+2}{3}$\,. Finally, if $n_2>\Theta^{\prime}+3$ that is, $n> \frac{5k+2}{3}$ then for the trees of $\mathcal{CT}_{3}(n,k)$ we have $x_{2,2}= n_2-\Theta^{\prime}-4=\frac{3n-5k-5}{3}$, $x_{3,3}=0$, $x_{2,3}= 4$, $x_{2,4}=\Theta^{\prime}=2(\frac{k-5}{3})$, $x_{3,4}=2$, $x_{4,4}=n_4-1=\frac{k-8}{3}$ and hence $W_p(T)=\frac{3n+10k-47}{3}$ for every $T\in\mathcal{CT}_{3}(n,k)$ when $n > \frac{5k+2}{3}$\,.

By previous considerations, the structure of a chemical tree that maximizes $W_{p}$ is completely determined, which enables us to state the following result.

\begin{thm}
Let $CT\in \mathcal{CT}_{n,k},$ where $3\leq k \leq n-1$, then
\[
W_{P}(CT)
\leq
\begin{cases}
3n-15 &\text{,\ $n \le \frac{5k}{3},$ \ $k\equiv 0(mod3)$ }\\
3n-16 &\text{,\ $ n = \frac{5k}{3}+1,$ \ $k\equiv 0(mod3)$}\\
\frac{3n+10k-39}{3}&\text{,\ $n> \frac{5k}{3}+1,$ \ $k\equiv 0(mod3)$}\\
3n-15&\text{,\ $n < \frac{5k+7}{3},$ \ $k\equiv 1(mod3)$ }\\
\frac{3n+10k-31}{3}&\text{,\ $n\geq \frac{5k+7}{3},$ \ $k\equiv 1(mod3)$}\\
3n-15&\text{,\ $n\le \frac{5k-7}{3},$ \ $k\equiv 2(mod3)$}\\
\frac{6n+5k-52}{3}&\text{,\ $\frac{5k-4}{3}\leq n \leq \frac{5k+2}{3},$ \ $k\equiv 2(mod3)$}\\
\frac{3n+10k-47}{3}&\text{,\ $ n > \frac{5k+2}{3},$ \ $k\equiv 2(mod3).$}
\end{cases}
\]
The equality holds if $CT\in \mathcal{CT}_{2}(n,k)$ for $k\equiv 0(mod3),$ $CT \in \mathcal{CT}_{1}(n,k)$ for $k\equiv 1(mod3)$, or $CT \in \mathcal{CT}_{3}(n,k)$ for $k\equiv 2(mod3)$.
\end{thm}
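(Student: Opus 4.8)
The plan is to reduce everything to the edge-partition of the extremal tree. Recall that for any chemical tree $T$,
$$W_p(T)=\sum_{uv\in E(T)}(d_u-1)(d_v-1)=x_{2,2}+2x_{2,3}+3x_{2,4}+4x_{3,3}+6x_{3,4}+9x_{4,4},$$
where $x_{i,j}$ denotes the number of edges of $T$ with endpoint degrees $i$ and $j$ (edges incident with a pendent vertex contribute nothing). So once all the $x_{i,j}$ of $CT_{max}$ are known, computing $W_p(CT_{max})$ is a trivial substitution; the work is to determine the $x_{i,j}$.

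First I would fix the degree sequence of $CT_{max}$. The handshake lemma together with $\sum_i n_i=n$ gives $n_1=2+n_3+2n_4$, and counting segments through their non-degree-$2$ endpoints gives $k=\tfrac12(n_1+3n_3+4n_4)=1+2n_3+3n_4$. Lemma~\ref{sm3b} says every path of $CT_{max}$ contains at most two vertices of degree $3$, and under that hypothesis Lemma~\ref{sm4} lets one replace a maximizer having more than two degree-$3$ vertices by one in $\mathcal{CT}_{n,k}$ with at most two such vertices without lowering $W_p$; hence we may take $n_3\in\{0,1,2\}$. Substituting into $k=1+2n_3+3n_4$ forces $k\equiv 1,0,2\pmod 3$ according as $n_3=0,1,2$, with $n_4=\tfrac{k-1}{3},\tfrac{k-3}{3},\tfrac{k-5}{3}$, and then $n_1$ and $n_2=n-k-1$ are determined. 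These are exactly the degree sequences defining $\mathcal{CT}_{1}(n,k)$, $\mathcal{CT}_{2}(n,k)$ and $\mathcal{CT}_{3}(n,k)$.

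Next I would use Lemmas~\ref{sm1},~\ref{sm2},~\ref{sm3} and~\ref{sm3-new} to fix the adjacencies. By Lemma~\ref{sm1} any two branching vertices are adjacent or joined by a single degree-$2$ vertex, so the degree-$4$ vertices together with the $x_{4,4}$ edges form a prescribed subtree; Lemmas~\ref{sm2},~\ref{sm3} and~\ref{sm3-new} then control how pendent paths, degree-$3$ vertices, and degree-$2$ vertices attach to this part. Counting edges at each degree class ($2x_{4,4}+\sum_{j\neq4}x_{4,j}=4n_4$, and analogously at degrees $3$ and $2$) pins down every $x_{i,j}$, the answer splitting according to whether the $n_2=n-k-1$ degree-$2$ vertices suffice to occupy all the non-branching slots next to degree-$4$ vertices — these dichotomies are precisely the thresholds $n<\tfrac{5k+7}{3}$, $n\le\tfrac{5k}{3}$, $n=\tfrac{5k}{3}+1$, $n\le\tfrac{5k-7}{3}$, $\tfrac{5k-4}{3}\le n\le\tfrac{5k+2}{3}$ appearing in the statement. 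This is exactly the edge-count bookkeeping carried out in the paragraphs preceding the theorem; it shows $CT_{max}\in\mathcal{CT}_{1}(n,k)$, $\mathcal{CT}_{2}(n,k)$ or $\mathcal{CT}_{3}(n,k)$, while conversely every tree in the relevant class has the same edge-partition, hence the same $W_p$ value.

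Finally, substituting the $x_{i,j}$ into $W_p=x_{2,2}+2x_{2,3}+3x_{2,4}+4x_{3,3}+6x_{3,4}+9x_{4,4}$ produces the eight-case formula (for instance $W_p=3(n-k-1)+6+9\cdot\tfrac{k-6}{3}=3n-15$ when $k\equiv0\pmod3$ and $n\le\tfrac{5k}{3}$, and similarly in the remaining cases). I expect the main obstacle to be the bookkeeping in the previous step: one must verify that the structural lemmas leave no placement freedom beyond what is already reflected in the constant value of $W_p$ within each class, and that the boundary cases — too few degree-$2$ (or too few degree-$3$) vertices to saturate the slots adjacent to degree-$4$ vertices — are handled consistently. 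Once the edge-partition is locked down, evaluating $W_p$ and writing out the eight resulting expressions is routine.
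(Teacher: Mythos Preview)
Your proposal is correct and follows essentially the same route as the paper: invoke Lemmas~\ref{sm1}--\ref{sm3-new} to pin down the degree sequence (forcing $n_3\in\{0,1,2\}$ and hence the mod-$3$ split on $k$) and the adjacency structure of $CT_{max}$, then read off the $x_{i,j}$ from the ``previous considerations'' preceding the theorem and substitute. One small slip of paraphrase: Lemma~\ref{sm1} actually forces every internal path to have length exactly~$1$, so consecutive branching vertices are \emph{adjacent} (not ``adjacent or joined by a single degree-$2$ vertex''); this only strengthens your conclusion that the branching vertices induce a subtree, so the argument is unaffected.
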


\begin{proof}
Using Lemmas \ref{sm1}-\ref{sm3-new} and previous considerations, it can be concluded that the tree that maximizes $W_{p}$ belongs to $\mathcal{CT}_{2}(n,k)$ for $k\equiv 0(mod3),$ $\mathcal{CT}_{1}(n,k)$ for $k\equiv 1(mod3)$ and $\mathcal{CT}_{3}(n,k)$ for $k\equiv 2(mod3)$. Wiener polarity indices of these trees belonging to these sets can easily be calculated by using simple calculations, which completes the proof.
\end{proof}

\section{On the minimum Wiener polarity index of chemical trees with given number of segments}
 Denote $\mathcal{CT}_{n,k}^{*} \subseteq \mathcal{CT}_{n,k}$ the class of $n$-vertex chemical trees with $k$ segments. Let $CT_{min}$ be the tree that minimizes the Wiener polarity index in the class $\mathcal{CT}_{n,k}^{*}$ for $7\leq k \leq n-2$.

\begin{lem}\label{sn1}
The tree $CT_{min}$ does not contain any pendent path of length greater than 1.
\end{lem}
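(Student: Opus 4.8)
The plan is to argue by contradiction using the same edge–switching move that proves Lemma~\ref{bn1} for the class $\mathcal{CT}^{*}_{n,b}$. Suppose $CT_{min}$ contains a pendent path $P\colon u_0u_1u_2\cdots u_tv$ with $t\ge 1$, where $d_{u_0}=1$, $d_{u_1}=d_{u_2}=\cdots=d_{u_t}=2$ and $d_v\ge 3$. The first step is to produce a neighbour $w$ of $v$, with $d_w\ge 2$, that lies on an internal path. For this I would record the elementary identity that an $n$-vertex chemical tree with $L$ leaves and $B$ branching vertices has exactly $L+B-1$ segments (pass to the homeomorphic reduction suppressing all degree-$2$ vertices); since the maximum degree is at most $4$, the case $B\le 1$ would force $L\le 4$ and hence at most $4$ segments, contradicting $k\ge 7$. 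Thus $B\ge 2$, and moreover every branching vertex is an endpoint of some internal path: if every segment emanating from $v$ ended at a leaf, then deleting $v$ would leave only paths, so $v$ would be the unique branching vertex. Hence such a neighbour $w$ exists, and neither $vu_0$ nor $u_{t-1}w$ is already an edge.

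Next I would carry out the switch $T'=CT_{min}-\{u_{t-1}u_t,\,vw\}+\{vu_0,\,u_{t-1}w\}$ and check that $T'\in\mathcal{CT}^{*}_{n,k}$. No degree is raised (the degrees of $v$ and $w$ are unchanged, $u_0$ goes from $1$ to $2$, and $u_t$ goes from $2$ to $1$), so $T'$ is an $n$-vertex chemical tree; it is connected because the three pieces created by removing $u_{t-1}u_t$ and $vw$ are stitched back together by the two new edges. The branching-vertex set of $CT_{min}$ is preserved, the pendent segment from $v$ to $u_0$ is turned into the pendent segment $v\,u_t$ of length $1$, and the internal path through $w$ is merely elongated by inserting $u_0,u_1,\dots,u_{t-1}$; therefore $T'$ still has exactly $k$ segments.

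Finally I would compare the two indices. Only a handful of edges incident with the modified path change their contribution, all the remaining ones joining two vertices of degree $2$ both before and after; summing these contributions yields
\[
W_p(CT_{min})-W_p(T')=(d_w-1)(d_v-2)\ge 1>0,
\]
since $d_w\ge 2$ and $d_v\ge 3$. This contradicts the minimality of $CT_{min}$, so no pendent path of length greater than $1$ can occur. I expect the only genuinely delicate point to be the verification that $T'$ lies in $\mathcal{CT}^{*}_{n,k}$, that is, the bookkeeping showing the switch neither merges two segments into one nor splits a segment in two; the index computation itself is just the short piece of degree arithmetic displayed above.
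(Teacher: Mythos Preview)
Your proof is correct and follows exactly the paper's own argument: the same edge switch $T'=CT_{min}-\{u_{t-1}u_t,vw\}+\{vu_0,u_{t-1}w\}$ and the same difference $W_p(CT_{min})-W_p(T')=(d_w-1)(d_v-2)>0$. You merely supply more detail than the paper does on two points it asserts without justification, namely the existence of the internal-path neighbour $w$ (via the segment count $L+B-1$ together with $k\ge 7$) and the verification that $T'\in\mathcal{CT}^{*}_{n,k}$.
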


\begin{proof}
Suppose, on the contrary, that there is a path $P:u_{0}u_{1}u_{2}...u_{t-1}u_{t}v$ with $t\geq 1$ in $CT_{min}$ where $d_v\ge 3$ and $d_{u_{0}}=1$, $d_{u_{1}}=d_{u_{2}}=...=d_{u_{t}}=2$. Let $w$ be a neighbor of  $v$ lying on some internal path (the existence of $w$ is confirmed because of the assumption $k\ge 7$). Let $CT^{\prime}=CT_{min}-\{u_{t-1}u_{t},vw\}+ \{vu_{0},u_{t-1}w\}$. It is observed that $CT^{\prime}\in \mathcal{CT}_{n,k}^{*}$\,\,.
As $d_{w}\geq 2$ and  $d_{v}\geq 3$, we have
\beqs
W_{p}(CT_{min})-W_{p}(CT^{\prime})&=&(d_{v}-1)(d_{w}-1)-(d_{w}-1)\\
&=&(d_{w}-1)(d_{v}-2)> 0,
\eeqs
a contradiction to the minimality of $CT_{min}$.
\end{proof}

\begin{lem}\label{sn2}
The tree $CT_{min}$ contains at least one vertex of degree 4.
\end{lem}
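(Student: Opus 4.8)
The plan is to combine a short parity observation with a local‑modification argument. First I would record the basic identities for a chemical tree: writing $n_i$ for the number of vertices of degree $i$, the handshake lemma together with $\sum_i n_i=n$ gives $n_2+2n_3+3n_4=n-2$ and the leaf count $n_1=n_3+2n_4+2$; on the other hand, counting segment‑endpoints (each of the $k$ segments has two endpoints, each a vertex of degree $\neq 2$, and a vertex $v$ with $d_v\neq 2$ is an endpoint of exactly $d_v$ segments) gives $2k=n_1+3n_3+4n_4$, whence
\[
k=2n_3+3n_4+1 .
\]
In particular $n_2=n-k-1\ge 1$ since $k\le n-2$. If $k$ is even, then $3n_4=k-1-2n_3$ is odd, so $n_4$ is odd and the statement is immediate. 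So from now on I would assume $k$ is odd, where $n_4=0$ is a priori consistent; note the hypothesis $k\ge 7$ is exactly what makes $n_3=(k-1)/2\ge 3$ below.

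Suppose, for contradiction, that $n_4(CT_{min})=0$. Then every branching vertex of $CT_{min}$ has degree $3$ and $n_3=(k-1)/2\ge 3$. By Lemma~\ref{sn1}, every pendent path has length $1$, so every leaf is adjacent to a branching vertex. Suppressing all degree‑$2$ vertices of $CT_{min}$ yields a tree whose vertices are the leaves and branching vertices of $CT_{min}$, whose leaves are those of $CT_{min}$, and whose internal vertices all have degree $3$. The subtree it induces on its internal vertices is a tree on $n_3\ge 3$ vertices, hence has at least two leaves; these correspond to two distinct branching vertices $w_1,w_2$ of $CT_{min}$, each with (at least) two pendent neighbours in $CT_{min}$. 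Since $n_3\ge 3$, there is also a third branching vertex $w_3\notin\{w_1,w_2\}$.

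Next I would build a competitor $T'\in\mathcal{CT}_{n,k}^{*}$ with $n_4(T')\ge 1$ and $W_p(T')<W_p(CT_{min})$, contradicting the minimality of $CT_{min}$. Because $k$ is odd, the identity forces every member of $\mathcal{CT}_{n,k}^{*}$ to have $n_4$ even, so $T'$ must satisfy $n_4(T')\ge 2$; comparing degree sequences, the natural choice of $T'$ differs from $CT_{min}$ by a rearrangement that converts three degree‑$3$ vertices into two degree‑$4$ vertices and one new pendent vertex, leaving $n$ and $n_2$ (hence, by the identity, $k$) unchanged. Concretely, I would delete two suitably chosen edges incident with $w_3$ — so that $w_3$ becomes pendent and $CT_{min}$ splits into three pieces — and add one new edge at each of $w_1,w_2$ reconnecting the two freed pieces through the cut neighbours of $w_3$, so that no degree other than those of $w_1,w_2,w_3$ changes; one then checks that $T'$ is connected and acyclic, has maximum degree $4$, and lies in $\mathcal{CT}_{n,k}^{*}$. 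Finally one evaluates $W_p(CT_{min})-W_p(T')$ over the finitely many edges whose endpoint degrees change (using Lemma~\ref{sn1} and the fact that, under the hypothesis $n_4=0$, every degree is at most $3$) and checks that it is positive.

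The hard part is precisely this last step: the required ``three $\deg 3$ vertices $\mapsto$ two $\deg 4$ vertices $+$ one leaf'' rearrangement is not an elementary edge swap, and a direct computation of $W_p(CT_{min})-W_p(T')$ produces an expression in the degrees of a few neighbouring vertices whose sign is not automatic. Forcing it to be positive requires a brief case analysis of the local structure around $w_1,w_2,w_3$ — for instance whether $w_3$ may also be chosen to be a ``cherry'' branching vertex, whether $CT_{min}$ contains two adjacent branching vertices, and the degrees of the non‑pendent neighbours of $w_1$ and $w_2$ — and in the unfavourable configurations one instead exhibits a different modification of $CT_{min}$ inside $\mathcal{CT}_{n,k}^{*}$ that strictly decreases $W_p$, again contradicting minimality. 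I expect this case distinction, rather than any single computation, to be the technical core of the argument.
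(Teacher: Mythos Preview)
Your plan and the paper's proof share the same core move: convert three degree-$3$ vertices into two degree-$4$ vertices and a new leaf, then check that $W_p$ strictly drops; your cherry vertices $w_1,w_2$ are exactly the paper's $v_2,v_{r-1}$. Two differences are worth recording. First, your parity shortcut $k=2n_3+3n_4+1$ (forcing $n_4$ odd whenever $k$ is even) is a clean addition the paper does not make; it disposes of half the cases for free. Second, where you anticipate a case analysis on the local structure around $w_1,w_2,w_3$, the paper sidesteps most of it by choosing the three vertices along a longest path $P=v_1\cdots v_r$: this forces the off-path neighbour of each of $v_2$ and $v_{r-1}$ to be pendent (so their neighbourhoods are completely known), and a preliminary $W_p$-preserving modification reduces to the situation where some interior $v_i$ on $P$ already has a pendent neighbour $w$. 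The single swap
\[
CT'=CT_{min}-\{wv_i,\,v_{i-1}v_i,\,v_iv_{i+1}\}+\{v_{i-1}v_{i+1},\,v_iv_{r-1},\,v_2w\}
\]
then yields $W_p(CT_{min})-W_p(CT')=3d_{v_{i-1}}+3d_{v_{i+1}}-d_{v_{i-1}}d_{v_{i+1}}-d_{v_3}-5$, which is checked positive in three explicit cases according to which of $d_{v_3},d_{v_{i-1}},d_{v_{i+1}}$ equal $2$ or $3$. So the ``hard part'' you flag collapses to a short computation once the three vertices are located via the longest path rather than via the suppressed tree.
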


\begin{proof}
Assume, on the contrary, the maximum degree of $CT_{min}$ is 3.
Let $P: v_1v_2\cdots v_r$ be the longest path in $CT_{min}$. The supposition $k\geq 7$ ensures that $P$ contains at least 3 vertices of degree 3. We note that both the vertices $v_1$, $v_r$ are pendent and by using Lemma \ref{sn1} we deduce that both the vertices $v_2$, $v_{r-1}$ have degree 3. If $P$ does not contain any branching vertex, different from $v_2$ and $v_{r-1}$, having a pendent neighbor. Let $u\not\in\{v_2,v_{r-1}\}$ be a branching vertex on $P$ with the non-pendent neighbor $u_1$, not lying on the path $P$. If $T'$ is the tree obtained from $CT_{min}$ by removing the neighbor(s) of $u_1$ different from $u$ and adding the these neighbor(s) to the vertex $v_r$, then $W_p(CT_{min})=W_p(T')$. Hence, we may assume that $P$ contains at least one branching vertex, say $v_i$, different from $v_2$ and $v_{r-1}$, having a pendent neighbor $w$. As $k\leq n-2$, so there is at least one vertex of degree 2 in $CT_{min}$. Without loss of generality we may assume that $d_{v_{r-2}}=2$ and $d_{v_{3}}\leq d_{v_{i-1}}\leq d_{v_{i+1}}$. Let
$CT^{\prime}=CT_{min}-\{wv_{i},v_{i-1}v_{i},v_{i}v_{i+1}\}+\{v_{i-1}v_{i+1}+v_{i}v_{r-1}+v_{2}w$\}.

%\begin{figure}[ht]
 %\centering
  %\includegraphics[width=0.50\textwidth]{Fig7.pdf}
   %\caption{Fig7}
    %\label{Fig7}
     %\end{figure}

then $CT^{\prime}\in \mathcal{CT}_{n,k}^{*}$ and
\beqs
W_{p}(CT_{min})-W_{p}(CT^{\prime})&=& 2(d_{v_{i-1}}-1)+2(d_{v_{i+1}}-1)+2+2(d_{v_{3}}-1)\\
&&- 3(d_{v_{3}}-1)-(d_{v_{i-1}}-1)(d_{v_{i+1}}-1)-3\\
&=&-5-d_{v_{3}}+3d_{v_{i-1}}+3d_{v_{i+1}}-d_{v_{i-1}}d_{v_{i+1}}
\eeqs
As, $d_{v_{3}}\leq d_{v_{i-1}}\leq d_{v_{i+1}}$\\
Note that $I=3d_{v_{i-1}}+3d_{v_{i+1}}-d_{v_{i-1}}d_{v_{i+1}}-d_{v_{3}}$, implying that:\\
i) $I=6$, for $d_{v_{3}}=d_{v_{i-1}}= d_{v_{i+1}}$\\
ii) $I=7$, for $2=d_{v_{3}}\leq d_{v_{i-1}}< d_{v_{i+1}}$\\
iii) $I=7$, for $2=d_{v_{3}}< d_{v_{i-1}}= d_{v_{i+1}}.$\\\\
Therefore, in each possible case we get $W_{p}(CT_{min})-W_{p}(CT^{\prime})> 0$, implying that, $W_{p}(CT_{min})>W_{p}(CT^{\prime})$, a contradiction to the choice of $CT_{min}$.
\end{proof}

\begin{lem}\label{sn3}
If the tree $CT_{min}\in \mathcal{CT}_{n,k}^{*}$ contains an internal path of length 1, then it does not contain an internal path of length greater than 2.
\end{lem}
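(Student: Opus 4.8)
\textbf{Proof plan for Lemma~\ref{sn3}.}
The plan is to mimic the structure of the earlier companion results (Lemmas~\ref{bn2} and~\ref{sm1}), namely to argue by contradiction via a local edge-swap that strictly decreases $W_p$. So suppose $CT_{min}$ contains both an internal path of length $1$, say on the adjacent branching vertices $u$ and $v$ (so $uv\in E(CT_{min})$ with $d_u,d_v\ge 3$), and an internal path $P\colon u_1u_2\cdots u_s$ of length $s-1\ge 3$ joining the branching vertices $u_1$ and $u_s$, with $d_{u_2}=\cdots=d_{u_{s-1}}=2$. The idea is to ``shift'' two consecutive degree-$2$ vertices off the long internal path and splice them between $u$ and $v$, i.e. take
\[
CT^{\prime}=CT_{min}-\{u_1u_2,\;u_2u_3,\;uv\}+\{u_1u_3,\;uu_2,\;u_2v\}.
\]
First I would check that this operation is well-defined and stays inside the class: it keeps the number of vertices fixed, keeps all degrees unchanged (only the degree-$2$ vertex $u_2$ is relocated, and $u_3$ stays a degree-$2$ internal vertex because $s-1\ge 3$ guarantees $u_3\neq u_s$), and it does not change the number of segments, so $CT^{\prime}\in\mathcal{CT}_{n,k}^{*}$. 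One should also confirm connectivity and that no new pendent path or extra branching vertex is created.

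Next I would compute the change in $W_p$ using the edge formula $W_p(T)=\sum_{xy\in E(T)}(d_x-1)(d_y-1)$. Only the edges incident to the five relevant vertices $u_1,u_2,u_3,u,v$ change. On the long-path side, the edges $u_1u_2$ and $u_2u_3$ (each contributing $(d_{u_1}-1)\cdot 1$ and $1\cdot 1$ respectively, since $d_{u_2}=d_{u_3}=2$) are replaced by $u_1u_3$ (contributing $(d_{u_1}-1)\cdot 1$) so those terms cancel except for the loss of the $(d_{u_2}-1)(d_{u_3}-1)=1$ term — wait, more carefully: before we have the pair of edges $u_1u_2,u_2u_3$ with total weight $(d_{u_1}-1)+1$, after we have $u_1u_3$ with weight $(d_{u_1}-1)$, a net loss of $1$ on this side; on the $uv$ side, the single edge $uv$ of weight $(d_u-1)(d_v-1)$ is replaced by $uu_2$ and $u_2v$ of total weight $(d_u-1)+(d_v-1)$. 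Collecting, one gets exactly
\[
W_p(CT_{min})-W_p(CT^{\prime})=(d_u-1)(d_v-1)-(d_u-1)-(d_v-1)+1=d_ud_v-2d_u-2d_v+4,
\]
which is precisely the expression appearing in Lemma~\ref{bn2}. Setting $f(x,y)=xy-2x-2y+4=(x-2)(y-2)$, this is $\ge 1>0$ whenever $d_u,d_v\ge 3$, so $W_p(CT_{min})>W_p(CT^{\prime})$, contradicting minimality.

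I expect the routine computation of $\Delta W_p$ to be unproblematic; the main obstacle, as always with these swap arguments, is the bookkeeping of degenerate configurations — in particular verifying that $\{u_1u_2,u_2u_3,uv\}$ really are three distinct edges and that $u,v$ are disjoint from the portion $\{u_1,u_2,u_3\}$ of the long path being altered. If the internal path of length $1$ happens to overlap the long internal path (e.g.\ $u=u_1$ or the edge $uv$ coincides with one of the path edges), the swap as written may collapse; I would handle this by first using Lemma~\ref{sn1} to rule out pendent complications and then, if necessary, choosing the two ``donor'' degree-$2$ vertices to be $u_{s-1},u_{s-2}$ at the far end of $P$ away from any overlap, or by observing that an edge shared between the two internal paths forces $d$-values that still make $f(d_u,d_v)>0$. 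Once disjointness is secured the rest is immediate from the sign of $(d_u-2)(d_v-2)$.
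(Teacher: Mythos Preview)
Your proposal is correct and follows essentially the same approach as the paper: the same edge-swap $CT'=CT_{min}-\{u_1u_2,u_2u_3,uv\}+\{u_1u_3,uu_2,u_2v\}$, the same difference $W_p(CT_{min})-W_p(CT')=d_ud_v-2d_u-2d_v+4$, and the same positivity argument (the paper phrases it via monotonicity of $f(x,y)=xy-2x-2y+4$ on $[3,4]^2$, while you factor it as $(d_u-2)(d_v-2)\ge1$). Your discussion of the degenerate overlap cases is actually more careful than the paper's own proof, which simply asserts $CT'\in\mathcal{CT}_{n,k}^{*}$ without further comment.
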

\begin{proof}
Suppose, on the contrary, that there is an internal path $u_1u_2\cdots u_s$ of length at least 3 in $CT_{min}$ provided that $u_1$ and $u_s$ are branching vertices, let there also exists a pair of adjacent, branching vertices $u$ and $v$ in $CT_{min}$. Let $CT^{\prime}=CT_{min}-\{u_1u_2,u_2u_3,uv\}+\{u_1u_3,uu_2,u_2v\}$,
then ${CT^{\prime}}\in \mathcal{CT}_{n,k}^{*}$ and
\beqs
W_{p}(CT_{min})-W_{p}(CT^{\prime})&=& d_{u}d_{v}-2d_{u}-2d_{v}+4,
\eeqs
which is positive because the function $f$ defined by $f(x,y)=xy-2x-2y+4$, with $3\le x,y\le4$, is increasing in both $x$ and $y$, and hence we have $W_{p}(CT_{min})>W_{p}(CT^{\prime})$, a contradiction to the choice of $CT_{min}$
\end{proof}

\begin{lem}\label{sn4}
If $CT_{min}\in \mathcal{CT}_{n,k}^{*}$ contains a vertex of degree 2 with the non pendent neighbors $x$ and $y$ such that $4< d_{x}+d_{y}<8,$ then $CT_{min}$ does not contain any pair of adjacent branching vertices $x^{\prime}$ and $y^{\prime}$ with $d_{x^{\prime}}+d_{y^{\prime}}>d_{x}+d_{y}.$
\end{lem}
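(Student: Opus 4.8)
The plan is to argue by contradiction using the same kind of local edge-switching transformation that drives Lemmas \ref{sn1}--\ref{sn3}. Suppose $CT_{min}$ contains a vertex $v$ of degree $2$ with non-pendent neighbours $x$ and $y$ satisfying $4<d_x+d_y<8$, and suppose also that $CT_{min}$ contains a pair of adjacent branching vertices $x'$, $y'$ with $d_{x'}+d_{y'}>d_x+d_y$. (Note that the hypotheses force at least one of $x,y$ to be branching, since $d_x,d_y\ge 2$ and their sum exceeds $4$; this is consistent with Lemma \ref{sn1} because $v$ lies on an internal path.) I would then build a competitor tree $T'$ by detaching the edge $x'y'$ and re-routing the piece through $v$: roughly, $T'=CT_{min}-\{vx,vy,x'y'\}+\{xy,vx',vy'\}$, i.e. contract the degree-$2$ vertex $v$ at the place where the ``heavy'' edge $x'y'$ sits and re-insert it at the ``light'' location between $x$ and $y$. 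One must check that this keeps the graph a tree, keeps all degrees $\le 4$, and---crucially---preserves the number of segments $k$, which holds because $v$ has degree $2$ throughout and the branching/non-branching status of every other vertex is unchanged.

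Then I would compute $W_p(CT_{min})-W_p(T')$ directly from the edge formula $W_p(T)=\sum_{uv\in E(T)}(d_u-1)(d_v-1)$. Only the edges incident to $v$, $x$, $y$, $x'$, $y'$ change their contribution, and after the dust settles the difference should reduce to something of the shape
\begin{align*}
W_p(CT_{min})-W_p(T')=(d_{x'}-1)(d_{y'}-1)-(d_x-1)(d_y-1)+(\text{lower-order correction}),
\end{align*}
where the correction comes from the $(d_v-1)=1$ factors on the four re-routed edges and is small. The point is that among pairs $(a,b)$ with $2\le a,b\le 4$, the product $(a-1)(b-1)$ is essentially monotone in the sum $a+b$ on the relevant range, so $d_{x'}+d_{y'}>d_x+d_y$ together with $4<d_x+d_y<8$ should force the difference to be strictly positive, contradicting minimality of $CT_{min}$.

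The main obstacle is case analysis in the degree bookkeeping: the quantities $d_x,d_y\in\{2,3,4\}$ with $d_x+d_y\in\{5,6,7\}$, and $d_{x'},d_{y'}\in\{3,4\}$ with $d_{x'}+d_{y'}\in\{6,7,8\}$ strictly larger than $d_x+d_y$, so there are only a handful of admissible $(d_x,d_y,d_{x'},d_{y'})$ tuples, but one must verify positivity of $W_p(CT_{min})-W_p(T')$ in each, and also handle degenerate adjacencies (e.g.\ $v$ adjacent to $x'$ or $y'$, or $x\in\{x',y'\}$) where the switching edges coincide and the difference formula picks up extra terms. I expect these boundary configurations---not the generic computation---to be where care is needed; in each such case one either adapts the transformation slightly or observes that Lemma \ref{sn1} or Lemma \ref{sn3} already excludes it. Once every case yields a strictly positive difference, the contradiction is complete.
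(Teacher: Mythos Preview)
Your proposal is correct and follows essentially the same route as the paper: the identical edge-switch $T'=CT_{min}-\{vx,vy,x'y'\}+\{xy,vx',vy'\}$, followed by computing $W_p(CT_{min})-W_p(T')$. The paper condenses the difference to the closed form $(d_{x'}d_{y'}-2(d_{x'}+d_{y'}))-(d_xd_y-2(d_x+d_y))$ and asserts positivity directly rather than by cases, while your planned finite case check is equally valid---and your attention to coincident-vertex degeneracies is actually more careful than the paper, which ignores them.
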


\begin{proof}
Suppose, on the contrary, that there is an adjacent pair of branching vertices $x^{\prime}$ and $y^{\prime}$ with $d_{x^{\prime}}+d_{y^{\prime}}>d_{x}+d_{y}$ in $CT_{min}$\,, where $x$ and $y$ are the non-pendent neighbors of a vertex $z$ of degree 2 in $CT_{min}$ such that $4< d_{x}+d_{y}<8$.

%\begin{figure}[ht]
 %\centering
  %\includegraphics[width=0.75 \textwidth]{Fig8.pdf}
   %\caption{Fig8}
    %\label{fig8}
     %\end{figure}

Let a tree $CT^{\prime}$ is obtained as follows:
$$CT^{\prime}=CT_{min}-\{xz,zy,x^{\prime}y^{\prime}\}+\{xy,x^{\prime}z,zy^{\prime}\}$$
It can be observed that ${CT^{\prime}}\in $ and
\beqs
W_{p}(CT_{min})-W_{p}(CT^{\prime})&=&2(d_{x}+d_{y})-2(d_{x^{\prime}}+d_{y^{\prime}})+d_{x^{\prime}}d_{y^{\prime}}-d_{x}d_{y}\\
&=&(d_{x^{\prime}}d_{y^{\prime}}-2(d_{x^{\prime}}+d_{y^{\prime}}))-(d_{x}d_{y}-2(d_{x}+d_{y}))\\
&>&0,
\eeqs
a contradiction.
\end{proof}
By previous considerations we can state the following result.
\begin{thm}
If $T$ is a chemical tree with minimum Wiener Polarity index $W_{p}$ for $7\leq k \leq n-2$, then $T\in \mathcal{CT}_{n,k}^{*}.$
\end{thm}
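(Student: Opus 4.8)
The plan is to show that the tree $CT_{min}$ realising the minimum of $W_p$ over $\mathcal{CT}_{n,k}^{*}$ is forced, by the structural restrictions of Lemmas \ref{sn1}--\ref{sn4} together with elementary counting, to have a completely prescribed degree sequence and an essentially unique arrangement of its vertices; once the admissible family is pinned down, a direct evaluation shows that $W_p$ is constant on it, which is exactly the assertion.

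First I would fix the arithmetic. Writing $n_i$ for the number of vertices of degree $i$ in a chemical tree, the identities $\sum_i n_i = n$ and $\sum_i i\,n_i = 2(n-1)$ give $n_1 = n_3 + 2n_4 + 2$, while counting segment end-point incidences (a leaf lies in exactly one segment, a vertex of degree $d\ge 3$ lies in exactly $d$ segments) gives $k = 2n_3 + 3n_4 + 1$. Hence, for fixed $n$ and $k$, the whole degree sequence is determined by $n_4$ alone, and $n_4 \equiv k-1 \pmod 2$. Since a leaf contributes $0$ to $W_p = \sum_{uv\in E}(d_u-1)(d_v-1)$ and the cost of an edge grows with the degrees of its ends, minimising $W_p$ pushes towards keeping $n_4$ as small as Lemma \ref{sn2} permits and towards inserting the vertices of degree $2$ so as to break up the most expensive edges.

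Next I would assemble the geometric constraints. Lemma \ref{sn1} forces every pendent path to have length $1$, so every leaf hangs directly off a branching vertex; Lemma \ref{sn2} forces $n_4 \ge 1$; Lemma \ref{sn3} forces that once one internal path has length $1$, no internal path has length $>2$; and Lemma \ref{sn4} controls how a degree-$2$ vertex placed between two branching vertices restricts the degree sums of adjacent branching pairs. From these, together with the counting above, I would argue first that $n_4$ takes its least admissible value --- namely $1$ when $k$ is even and $2$ when $k$ is odd --- and then that the placement of the $n-k-1$ vertices of degree $2$ is unique up to isomorphism within the resulting family: the branching vertices (all of degree $3$ except the one or two of degree $4$) form a subtree whose branch-edges are internal paths of length $1$, the leaves fill the remaining stubs, and the surplus degree-$2$ vertices, when $n-k-1$ exceeds the number of available stubs, are distributed so that pendent paths stay of length $1$ and the degree-sum condition of Lemma \ref{sn4} is respected. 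This produces an explicit degree sequence with explicit placement rules, and --- exactly as in Sections 2--4 --- one must split into sub-cases according to whether $n-k-1$ is less than, equal to, or greater than the number of stubs available on the branching vertices. A final, routine computation of $W_p$ over each sub-family shows that its value is independent of the member, so $CT_{min}$ lies in the described class.

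I expect the main obstacle to be sharpening Lemma \ref{sn2}: the bound ``$n_4\ge 1$'' has to be promoted to ``$n_4$ is minimal'', which means showing that replacing two vertices of degree $4$ by three vertices of degree $3$ (the transformation compatible with fixed $n$ and $k$ by the counting above, which also deletes one leaf) can always be arranged without increasing $W_p$ --- an argument in the spirit of Lemma \ref{bn3} and the Remark following it, but with the added bookkeeping that the number of segments must be preserved. Closely tied to this is the uniqueness of the degree-$2$ placement: excluding competing arrangements requires the explicit $W_p$-comparisons of Lemma \ref{sn4}, and the boundary ranges of $n-k$ (too few vertices of degree $2$ to separate all branching vertices) have to be handled separately, just as the edge cases are treated in the earlier sections. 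Verifying that each described sub-family is non-empty for $7\le k\le n-2$ is also needed for sharpness, but is straightforward.
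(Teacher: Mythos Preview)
The paper supplies essentially no proof of this theorem: the entire argument is the single sentence ``By previous considerations we can state the following result.'' In other words, the paper treats the theorem as nothing more than a summary of Lemmas~\ref{sn1}--\ref{sn4}. Note too that the class $\mathcal{CT}_{n,k}^{*}$ is introduced at the start of Section~5 simply as ``the class of $n$-vertex chemical trees with $k$ segments'', i.e.\ with the \emph{same} defining property as $\mathcal{CT}_{n,k}$; no additional structural restrictions are recorded. Read literally, the theorem is therefore a tautology, and the paper's ``proof'' matches that reading.

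Your proposal, by contrast, aims at something far stronger: a complete determination of the extremal degree sequence (in particular, that $n_4$ is exactly $1$ or $2$ according to the parity of $k$) and of the placement of the degree-$2$ vertices, followed by a verification that $W_p$ is constant on the resulting family. This is a genuinely different and much more ambitious programme than anything the paper carries out in Section~5. Your counting identities ($n_1=n_3+2n_4+2$, $k=2n_3+3n_4+1$) are correct, and you are right that Lemma~\ref{sn2} only gives $n_4\ge 1$; promoting this to ``$n_4$ is minimal'' is not established anywhere in the paper and would require a further transformation lemma of the kind you sketch. So the gap you identify is real --- but it is a gap in \emph{your} intended theorem, not in the paper's, because the paper never claims to pin down $n_4$ or to compute the minimum value of $W_p$ over $\mathcal{CT}_{n,k}$. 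If the assignment is to reproduce the paper's proof, you are over-shooting; if it is to complete Section~5 in the spirit of Sections~2--4, your outline is reasonable but the $n_4$-minimality step is, as you note, the crux and is not yet done.
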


We end this article by noting that Theorems 2.6  and 4.7 gives the complete solution of the problem of finding the chemical trees of order $n$ with a fixed number of segments or branching vertices and having the maximal $W_p$ value, which was left open in the recent paper \cite{Noureen-20}.


\begin{thebibliography}{00}

\bibitem{Ali-18a} A. Ali, Z. Du, M. Ali, A note on chemical trees with minimum Wiener polarity index, Appl. Math. Comput. 335 (2018) 231--236.
\bibitem{Ashrafi-17} A. R. Ashrafi, A. Ghalavand, Ordering chemical trees by Wiener polarity index, Appl. Math. Comput. 313 (2017) 301--312.
\bibitem{Balban-13} A. T. Balaban, Chemical graph theory and the sherlock holmes principle, HYLE 19 (2013) 107--134.
\bibitem{Chen-16} L. Chen, T. Li, J. Liu, Y. Shi, H. Wang, On the Wiener polarity indexof lattice networks, PLoS One 11 (2016) e0167075.
\bibitem{Deng-11} H. Deng, On the extremal Wiener polarity index of chemical trees, MATCH Commun. Math. Comput. Chem. 66 (2011) 305--314.
\bibitem{Du-18a}Z. Du, A. Ali, The Alkanes with maximum Wiener polarity index, Mol. Inf. 37 (2018) 1800076.
\bibitem{Du-18b}Z. Du, A. Ali, The inverse Wiener polarity index problem for chemical trees, . PLoS ONE 13 (2018) e0197142.
\bibitem{Dobrynin} A. A. Dobrynin, R. Entringer, I. Gutman, Wiener index of trees: theory and applications, Acta Appl. Math. 66 (2001) 211--249.

\bibitem{Du-09} W. Du, X. Li, Y. Shi, Algorithms and extremal problem on Wiener polarity index, MATCH Commun. Math. Comput. Chem. 62 (2009) 235--244.

\bibitem{Deng-10a} H. Deng, H. Xiao, The maximum Wiener polarity index of trees with $k$ pendants, Appl. Math. Lett. 23 (2010) 710--715.


\bibitem{Deng-10} H. Deng, H. Xiao, F. Tang, On the extremal Wiener polarity index of trees with a given diameter, MATCH Commun. Math. Comput. Chem. 63 (2010) 257--264.

\bibitem{Estrada-13}  E. Estrada, D. Bonchev, Chemical Graph Theory, in Handbook of Graph Theory, 2nd ed., J. L. Gross, J. Yellen, P. Zhang (Eds.), CRC Press, Boca Raton, FL, 2013, pp. 1538--1558.
\bibitem{Furtula-14}B. Furtula, I. Gutman, S. Ediz, On the difference of Zagreb indices, Discrete Appl. Math. 178 (2014) 83--88.

\bibitem{Gutman-14} I. Gutman, B. Furtula, C. Elphick, Three new/old vertex-degree-based topological indiceds, MATCH Commun. Math. Comput. Chem. 72 (2014) 617--632.


\bibitem{Hua-16} H. Hua, K. C. Das, On the Wiener polarity index of graphs, Appl. Math. Comput. 280 (2016) 162--167.

\bibitem{Hosoya-02}H. Hosoya, Y. Gao, in: D. H. Rouvray, R. B. King (Eds.), Topology in Chemistry – Discrete Mathematics of Molecules, Horwood, Chichester 2002 pp. 38--57.

\bibitem{Hou-12} H. Hou, B. Liu, Y. Huang, The maximum Wiener polarity index of unicyclic graphs, Appl. Math. Comput. 218 (2012) 10149--10157.

\bibitem{Lin-14} H. Lin, On the Wiener index of trees with given number of branching vertices, MATCH Commun. Math. Comput. Chem. 72 (2014) 301--310.

\bibitem{Luk-98} I. Lukovits, W. Linert, Polarity–numbers of cycle–containing structures, J. Chem. Inf. Comput. Sci. 38 (1998) 715--719.

\bibitem{Liu-12} M. Liu, B. Liu, The second Zagreb indices and Wiener polarity indices of trees wit given degree sequences, MATCH Commun. Math. Comput. Chem. 67 (2012) 439--450.

\bibitem{Lei-18} H. Lei, T. Li, Y. Ma, H. Wang, Analyzing lattice networks through substrucutres, Appl. Math. Comput. 329 (2018) 297--314.

\bibitem{Lei-17} H. Lei, T. Li, Y. Shi, H. Wang, Wiener polarity index and its generalization in trees, MATCH Commun. Math. Comput. Chem. 78 (2017) 199--212.

\bibitem{Milic-04} A. Mili$\check{c}$evi$\acute{c}$, S. Nikoli$\acute{c}$, On variable Zagreb indices, Croat. Chem. Acta 77 (2004) 97--101.

\bibitem{Ma-16} J. Ma, Y. Shi, Z. Wang, J. Yue, On Wiener polarity index of bicyclic networks, Sci. Rep. 6 (2016) 19066.

\bibitem{Ma-14} J. Ma, Y. Shi, J. Yue,The Wiener polarity index of graph products,  Ars Combin. 116 (2014) 235--244.

\bibitem{Noureen-20} S. Noureen, A. Ali, A. A. Bhatti, On the extremal Zagreb indices of $n$-vertex chemical trees with fixed number of segments or branching vertices, MATCH Commun. Math. Comput. Chem. 84 (2020) 513--534.


\bibitem{Shafi-17a} F. Shafiei, Prediction of physical and thermodynamic properties of aliphatic ethers from molecular structures by multiple linear regression, J. Chil. Chem. Soc. 62 (2017) 3389--3392.

\bibitem{Shaf-17} S. Shafique, A. Ali, On the reduced second Zagreb index of trees, Asian-European J. Math. 10 (2017) 1750084.

\bibitem{Shafi-17b} F. Shafi, A. Saeidifar, QSPR study of some physicochemical properties of sulfonamides using topological and quantum chemical indices, J. Chem. Soc. Pak. 39 (2017) 366--373.
\bibitem{Safari-17} A. Safari, F. Shafiei, A. Saeidifar, QSPR models of physicochemical properties of natural amino acids by using topological indices and MLR method, J. Chem. Soc. Pak. 39 (2017) 752--757.

\bibitem{Tratnik-19} N. Tratnik, Formula for calculating the Wiener polarity index with applications to benzenoid graphs and phenylenes, J. Math. Chem. 57 (2019) 370--383.
\bibitem{Wiener-47} H. Wiener, Structural determination of paraffin boiling points, J. Am. Chem. Soc. 69 (1947) 17--20.
\bibitem{Yue-18} J. Yue, H. Lei, Y. Shi, On the generalized Wiener polarity index of trees with a given diameter, Discrete Appl. Math. 243 (2018) 279--285.

\bibitem{Zhang-16} Y. Zhang, Y. Hu, The Nordhaus-Gaddum-type inequality for the Wiener polarity index, Appl. Math. Comput. 273 (2016) 880--884.
\end{thebibliography}
\end{document}